\theoremstyle{plain} \numberwithin{equation}{section}
\newtheorem{theo}{Theorem}[section]
\newtheorem{lemm}[theo]{Lemma}
\theoremstyle{definition}
\newtheorem*{defi}{Definition}
\newtheorem*{rema}{Remark}
\newtheorem*{exam}{Example}
\newtheorem*{claim}{Claim}
\def\mG{\mathcal G}
\def\mHT{H_T}
\def\Z{\mathbb Z}
\def\Q{\mathbb Q}
\def\l{\ell}
\def\t{t}
\def\w{w}
\def\s{\sigma}
\def\mL{\mathcal L}
\def\mA{\mathcal A}
\def\mB{\mathcal B}
\def\mC{\mathcal C}
\def\mD{\mathcal D}
\def\mT{\frak A_n}
\DeclareMathOperator{\Hom}{Hom}
\DeclareMathOperator{\Map}{Map}
\DeclareMathOperator{\rank}{rank}
\subjclass[2010]{Primary 14M15; Secondary 55N91}
\keywords{flag manifold, GKM graph, equivariant cohomology}
\begin{document}
\title[Cohomology ring of the GKM graph]{The cohomology ring of the GKM graph\\ of a flag manifold of classical type}
\author[Y. Fukukawa]{Yukiko Fukukawa}  \author[H. Ishida]{Hiroaki Ishida} \author[M. Masuda]{Mikiya Masuda}
\address{Department of Mathematics, Osaka City University, Sumiyoshi-ku, Osaka 558-8585, Japan.}
\email{yukiko.fukukawa@gmail.com} \email{masuda@sci.osaka-cu.ac.jp}
\address{Osaka City University Advanced Mathematical Institute, Sumiyoshi-ku, Osaka 558-8585, Japan.}
\email{ishida@sci.osaka-cu.ac.jp} 
\date{\today}

\begin{abstract}
If a closed smooth manifold $M$ with an action of a torus $T$ satisfies certain conditions, then a labeled graph $\mG_M$ with labeling in $H^2(BT)$ is associated with $M$, which encodes a lot of geometrical information on $M$.  For instance, the \lq\lq graph cohomology" ring $\mHT^*(\mG_M)$ of $\mG_M$ is defined to be a subring of $\bigoplus_{v\in V(\mG_M)}H^*(BT)$, where $V(\mG_M)$ is the set of vertices of $\mG_M$,  and is known to be often isomorphic to the equivariant cohomology $H^*_T(M)$ of $M$. In this paper, we determine the ring structure of $\mHT^*(\mG_M)$ {with $\Z$ (resp. $\Z[\frac{1}{2}]$) coefficients when $M$ is a flag manifold of type A, B or D (resp. C) in an elementary way}.    
\end{abstract} 

\maketitle 

\section{Introduction}\label{sect1}

Let $T$ be a {compact} torus of dimension $n$ and $M$ a closed smooth $T$-manifold.  The equivariant cohomology of $M$ is defined to be the ordinary cohomology of the Borel construction of $M$, that is, 
\[
H^*_T(M):=H^*(ET\times_T M)
\]
where $ET$ denotes the total space of the universal principal $T$-bundle $ET\to BT$ and $ET\times_T M$ denotes the orbit space of $ET\times M$ by the diagonal $T$-action.  
{Throughout this paper, all cohomology groups are taken with $\Z$ coefficients unless otherwise stated}. 
The equivariant cohomology of $M$ contains a lot of geometrical information on $M$.  Moreover it is often easier to compute $H^*_T(M)$ than $H^*(M)$ by virtue of the Localization Theorem which implies that the restriction map 
\begin{equation} \label{iota}
\iota^*\colon H^*_T(M)\to H^*_T(M^T)
\end{equation} 
to the $T$-fixed point set $M^T$ is often injective, in fact, this is the case when $H^{odd}(M)=0$.  When $M^T$ is isolated, $H^*_T(M^T)=\bigoplus_{p\in M^T}H^*_T(p)$ and hence $H^*_T(M^T)$ is a direct sum of copies of a polynomial ring in $n$ variables because $H^*_T(p)=H^*(BT)$.  

Therefore we {suppose that} $H^{odd}(M)=0$ and $M^T$ is isolated. Goresky-Kottwitz-MacPherson \cite{go-ko-ma98} (see also \cite[Chapter 11]{gu-st99}) found that under the further condition that the weights at a tangential $T$-module are pairwise linearly independent at each $p\in M^T$, the image of $\iota^*$ in \eqref{iota} above is determined  by the fixed point sets of codimension one subtori of $T$ when {considering cohomology with $\Q$ coefficients}.  Their result motivated Guillemin-Zara \cite{gu-za01} to associate a labeled graph $\mG_M$ with $M$ and define the \lq\lq graph cohomology" ring $\mHT^*(\mG_M)$ of $\mG_M$, which is a subring of $\bigoplus_{p\in M^T}H^*(BT)$. Then the result of Goresky-Kottwitz-MacPherson can be stated that $H^*_T(M)\otimes \Q$ is isomorphic to $\mHT^*(\mG_M)\otimes\Q$ as graded rings when $M$ satisfies the conditions mentioned above. 

The result of Goresky-Kottwitz-MacPherson can be applied to many important $T$-manifolds $M$ such as flag manifolds, compact smooth toric varieties and so on.  When $M$ is such a nice manifold, $H^*_T(M)$ is known to be often isomorphic to $\mHT^*(\mG_M)$ without tensoring with $\Q$ (see \cite{ha-he-ho05}, \cite{ma-ma-pa07} for example).  In this paper, we determine the ring structure of $\mHT^*(\mG_M)$ {(resp. $\mHT^*(\mG_M)\otimes \Z[\frac{1}{2}]$) in an elementary way when $M$ is a flag manifold of type A, B or D (resp. C)}. 

The equivariant cohomology ring $H^*_T(M)$ of a flag manifold $M$ of classical type is determined (see \cite{fu-pr98} for example) and our computation of $H^*_T(\mG_M)$ confirms that 
{(resp. $H^*_T(M)\otimes\Z[\frac{1}{2}]$)} is isomorphic to $\mHT^*(\mG_M)$ {(resp. $\mHT^*(\mG_M)\otimes \Z[\frac{1}{2}]$) when $M$ is of type A, B or D (resp. C)}.  The main point in our computation is to show that $\mHT^*(\mG_M)$ is generated by some elements which have a simple combinatorial description.  When $M$ is a flag manifold of type $A_{n-1}$, those elements $\tau_1,\dots,\tau_n$ in $\mHT^*(\mG_M)$ correspond to the equivariant first Chern classes in $H^*_T(M)$ of complex line bundles over $M$ obtained from the flags.  One can show that those first Chern classes generate $H^*_T(M)$ over $H^*(BT)$ using topological techniques.  However, our concern is {to compute the graph cohomology $\mHT^*(\mG_M)$ directly, and so} we show that $\tau_1,\dots,\tau_n$ generate $\mHT^*(\mG_M)$ over $H^*(BT)$ in a purely combinatorial or elementary way.

This paper is organized as follows.  In Section~\ref{sect2} we introduce the notion of a labeled graph and its graph cohomology following the notion of GKM graph and its graph cohomology.  We treat type A in Section~\ref{sect3}, which is a prototype of our argument. Type C is treated in Section~\ref{sectC} and the argument is almost the same as type A {if we work over $\Z[\frac{1}{2}]$ coefficients}.  Types B and D can {also} be treated similarly but more subtle arguments are necessary when we work over $\Z$ coefficients.  This is done in Sections~\ref{sectB} and~\ref{sectD}.

This paper is the detailed and improved version of the announcement \cite{fuku10}. 
Recently the first author (\cite{fuku12}) has determined the ring structure of $H^*_T(\mG_M)$ along the line developed in this paper when $M$ is the flag manifold of type $G_2$.

\section{Labeled graphs and graph cohomology} \label{sect2}

Let $T$ be a compact torus of dimension $n$.  Any homomorphism $f$ from $T$ to a circle group $S^1$ induces a homomorphism $f^*\colon H^*(BS^1)\to H^*(BT)$, so assigning $f$ to $f^*(u)$, where $u$ is a fixed generator of $H^2(BS^1)$, defines a homomorphism from $\Hom(T,S^1)$ (the group of homomorphisms from $T$ to $S^1$) to $H^2(BT)$.  As is well-known, this homomorphism is an isomorphism so that we make the following identification 
\[
\Hom(T,S^1)=H^2(BT)
\]
and use $H^2(BT)$ instead of $\Hom(T,S^1)$ throughout this paper. 

Let $\mG$ be a graph with labeling 
\[
\l(e)\in H^2(BT) \quad\text{for each edge $e$ of $\mG$.}
\]  
We call $\mG$ a \emph{labeled graph} in this paper. Remember that $H^*(BT)$ is a polynomial ring over $\Z$ generated by elements in $H^2(BT)$. 

\begin{defi}
The graph cohomology ring of $\mG$, denoted $\mHT^*(\mG)$, is defined to be the subring of $\Map(V(\mG),H^*(BT))=\bigoplus_{v \in V(\mG)} H^*(BT)$, where $V(\mG)$ denotes the set of vertices of $\mG$, satisfying the following condition:
\begin{quote}
$h\in \Map(V(\mG),H^*(BT))$ is an element of $\mHT^{*}(\mG)$ if and only if $h(v)-h(v')$ is divisible by $\l(e)$ in $H^*(BT)$ whenever the vertices $v$ and $v'$ are connected by an edge $e$ in $\mG$.  
\end{quote}
Note that $\mHT^*(\mG)$ has a grading induced from the grading of $H^*(BT)$.  
\end{defi}

\begin{rema}
Guillemin-Zara \cite{gu-za01} introduced the notion of GKM graph motivated by the result of Goresky-Kottwitz-MacPherson \cite{go-ko-ma98}.  It is a labeled graph but requires more conditions on the labeling $\l$ and encodes more geometrical information on a $T$-manifold $M$ when it is associated with $M$.  However, what we are concerned with in our paper is the graph cohomology of $\mG$ defined above and for that purpose we do not need to require any condition on the labeling $\l$ although the labeled graphs treated in this paper are all GKM graphs.  
\end{rema}

Here is an example of a labeled graph arising from a root system, which is our main concern in this paper. 

\begin{exam}
For a root system $\Phi$ in $H^2(BT)$ (with an inner product) we define a labeled graph $\mG_\Phi$ as follows.  The vertex set $V(\mG_\Phi)$ of $\mG_\Phi$ is the Weyl group $W_\Phi$ of $\Phi$, which is generated by reflections $\s_\alpha$ determined by $\alpha\in \Phi$. Two vertices $\w$ and $\w'$ are connected by an edge, denoted $e_{\w,\w'}$, if and only if there is an element $\alpha$ of $\Phi$ such that $\w'=\w\s_\alpha $, and we label the edge $e_{\w,\w'}$ with $w\alpha$.  Since $\s_{\alpha}=\s_{-\alpha}$, this labeling has ambiguity of sign but the graph cohomology ring $\mHT^*(\mG_\Phi)$ is independent of the sign. 

If $G$ is a compact semisimple Lie group with $\Phi$ as the root system and $T$ is a maximal torus of $G$, then the labeled (or GKM) graph associated with $G/T$ is $\mG_\Phi$, see \cite[Theorem 2.4]{gu-ho-za06}.  
\end{exam}

\section{Type $A_{n-1}$} \label{sect3}

Let $\{\t_i\}_{i=1}^n$ be a basis of $H^2(BT)$, so that $H^*(BT)$ can be identified with the polynomial ring $\Z[\t_1,\t_2,\dots,\t_n]$. We choose an inner product on $H^2(BT)$ such that the basis $\{\t_i\}_{i=1}^n$ is orthonormal.  Then 
\begin{equation} \label{root}
\Phi(A_{n-1}):=\{\pm(\t_i-\t_j)  \mid 1\leq i < j \leq n\}
\end{equation}
is a root system of type $A_{n-1}$.  We denote by $\mA_n$ the labeled graph associated with $\Phi(A_{n-1})$.  The graph $\mA_n$ has the permutation group $S_n$ on $n$ letters $[n]=\{1,2,\dots,n \}$ as the vertex set. We use the one-line notation $\w=\w(1)\w(2)\dots\w(n)$ for permutations.  Two vertices $\w, \w'$ are connected by an edge $e_{\w,\w'}$ if and only if there is a transposition $(i,j)\in S_n$ such that $\w'=\w\cdot(i,j)$, in other words, 
\[
\text{$\w'(i)=\w(j)$,\quad $\w'(j)=\w(i)$\quad and\quad $\w'(r)=\w(r)$\ \  for $r\neq i,j$,}
\]
and the edge $e_{\w,\w'}$ is labeled by $\t_{\w(i)}-\t_{\w'(i)}$. 

For each $i=1,\dots,n$, we define elements $\tau_i, \t_i$ of $\Map(V(\mA_n),H^*(BT))$ by 
\begin{equation}  \label{tau}
\tau_i(\w):=\t_{\w(i)},\quad \t_i(\w):=\t_i\quad\text{for $\w\in S_n$}.
\end{equation}
In fact, both $\tau_i$ and $\t_i$ are elements of $\mHT^2(\mA_n)$. 

\begin{rema} 
Let $0 \subset E_1 \subset \cdots \subset E_n$ be the tautological flag of bundles over a flag manifold of $A_{n-1}$ type. They admit natural $T$-actions and one can see that $\tau_i$ corresponds to the equivariant first Chern class $c_1^T(E_i/E_{i-1})$ of the equivariant line bundle $E_i/E_{i-1}$.  
\end{rema}

\begin{exam}
The case $n=3$. The root system $\Phi(A_2)$ is $\{ \pm(\t_i-\t_j)|1\leq i < j \leq 3\}$.
The labeled graph $\mA_3$ and $\tau_i$ for $i=1,2,3$ are as follows. 
\\
\ \ \ \ \ \ 
{\tiny
\setlength{\unitlength}{0.3mm}
\begin{picture}(130,50)
 \linethickness{1pt}
 \put(5,0){\line(100,173){15}}
 \put(20,25.95){\line(1,0){30}}
 \put(50,25.95){\line(100,-173){15}}
 \put(65,0){\line(-100,-173){15}}
 \put(50,-25.95){\line(-1,0){30}}
 \put(5,0){\line(100,-173){15}}
 \put(5,0){\line(1,0){60}}
 \put(20,25.95){\line(200,-346){30}}
 \put(20,-25.95){\line(200,346){30}}
  \put(-15,0){213}
  \put(2,28){123}
  \put(50,28){132}
  \put(67,0){312}
  \put(50,-38){321}
  \put(4,-38){231}
  \put(30,29){$\scriptstyle\t_2-\t_3$}
  \put(58,14){$\scriptstyle\t_1-\t_3$}
  \put(58,-20){$\scriptstyle\t_1-\t_2$}
  \put(-5,-55){The labeled graph $\mA_3$}
\end{picture}
\ 
\begin{picture}(90,50)
 \linethickness{1pt}
 \put(5,0){\line(100,173){15}}
 \put(20,25.95){\line(1,0){30}}
 \put(50,25.95){\line(100,-173){15}}
 \put(65,0){\line(-100,-173){15}}
 \put(50,-25.95){\line(-1,0){30}}
 \put(5,0){\line(100,-173){15}}
 \put(5,0){\line(1,0){60}}
 \put(20,25.95){\line(200,-346){30}}
 \put(20,-25.95){\line(200,346){30}}
  \put(-10,0){$\t_2$}
  \put(2,28){$\t_1$}
  \put(50,28){$\t_1$}
  \put(67,0){$\t_3$}
  \put(50,-38){$\t_3$}
  \put(15,-38){$\t_2$}
   \put(30,-55){$\tau_1$}
\end{picture}
\ 
\begin{picture}(90,50)
\linethickness{1pt}
 \put(5,0){\line(100,173){15}}
 \put(20,25.95){\line(1,0){30}}
 \put(50,25.95){\line(100,-173){15}}
 \put(65,0){\line(-100,-173){15}}
 \put(50,-25.95){\line(-1,0){30}}
 \put(5,0){\line(100,-173){15}}
 \put(5,0){\line(1,0){60}}
 \put(20,25.95){\line(200,-346){30}}
 \put(20,-25.95){\line(200,346){30}}
  \put(-10,0){$\t_1$}
  \put(2,28){$\t_2$}
  \put(50,28){$\t_3$}
  \put(67,0){$\t_1$}
  \put(50,-38){$\t_2$}
  \put(15,-38){$\t_3$}
   \put(30,-55){$\tau_2$}
\end{picture} 
\ 
\begin{picture}(90,50)
 \linethickness{1pt}
 \put(5,0){\line(100,173){15}}
 \put(20,25.95){\line(1,0){30}}
 \put(50,25.95){\line(100,-173){15}}
 \put(65,0){\line(-100,-173){15}}
 \put(50,-25.95){\line(-1,0){30}}
 \put(5,0){\line(100,-173){15}}
 \put(5,0){\line(1,0){60}}
 \put(20,25.95){\line(200,-346){30}}
 \put(20,-25.95){\line(200,346){30}}
  \put(-10,0){$\t_3$}
  \put(2,28){$\t_3$}
  \put(50,28){$\t_2$}
  \put(67,0){$\t_2$}
  \put(50,-38){$\t_1$}
  \put(15,-38){$\t_1$}
   \put(30,-55){$\tau_3$}
\end{picture}
} 
\\ \\ 
\end{exam}


\vspace{1cm}

\begin{theo} \label{Atype}
Let $\mA_n$ be the labeled graph associated with the root system $\Phi(A_{n-1})$ of type $A_{n-1}$ in \eqref{root}. Then 
\begin{equation*}
\mHT^{*}(\mA_n)=\mathbb{Z}[\tau_1,{\scriptstyle\cdots},\tau_n,\t_1,{\scriptstyle\cdots},\t_n]/(e_i(\tau)-e_i(\t)\mid i=1,{\scriptstyle\cdots},n),
\end{equation*}
where $e_i(\tau)$ {\rm (}resp. $e_i(\t)${\rm )} is the $i^{th}$ elementary symmetric polynomial in $\tau_1,{\scriptstyle\cdots},\tau_n$ 
{\rm (}resp. $\t_1,{\scriptscriptstyle\cdots},\t_n${\rm )}.
\end{theo}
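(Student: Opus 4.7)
The plan is to show that the natural graded ring homomorphism
\[
\phi\colon \Z[\tau_1,\dots,\tau_n,\t_1,\dots,\t_n]/(e_i(\tau)-e_i(\t)\mid 1\leq i\leq n)\longrightarrow \mHT^*(\mA_n),
\]
sending the formal indeterminates to the elements defined in \eqref{tau}, is an isomorphism. Well-definedness is immediate: at each vertex $w$, the tuple $(\tau_i(w))_i$ is a permutation of $(\t_i)_i$, so $e_i(\tau)(w)=e_i(\t)$ in $\mHT^*(\mA_n)$. Injectivity will follow from surjectivity by a rank comparison: the domain is a free $\Z[\t]$-module of rank $n!$ by the classical freeness of $\Z[\tau]$ over its symmetric subring, while inverting all $\t_i-\t_j$ decomposes $\mHT^*(\mA_n)$ into a direct sum of $n!$ copies of the localized $H^*(BT)$, so the kernel of any $\Z[\t]$-surjection is torsion inside a free module and hence vanishes.

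The essential content is surjectivity, which I would prove by induction on $n$, the case $n=1$ being trivial. Given $h\in\mHT^*(\mA_n)$, restrict to the subgraph $\mA_{n-1}^{(n)}:=\{w\in S_n:w(n)=n\}$, which is canonically identified as a labeled graph with $\mA_{n-1}$; the generator $\tau_i\in\mHT^*(\mA_n)$ restricts to the corresponding generator of $\mHT^*(\mA_{n-1})$ for $i<n$, while $\tau_n$ restricts to the constant $\t_n$. Applying the inductive hypothesis with the enlarged coefficient ring $\Z[\t_1,\dots,\t_n]$ writes $h|_{\mA_{n-1}^{(n)}}$ as a polynomial $P(\tau_1,\dots,\tau_{n-1},\t_1,\dots,\t_n)$; viewing $P$ back as an element of $\mHT^*(\mA_n)$, the difference $h-P$ vanishes on $\mA_{n-1}^{(n)}$. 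The proof then reduces to the following divisibility lemma combined with an inner induction on cohomological degree.

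The key lemma---and the main obstacle---is: any $h'\in\mHT^*(\mA_n)$ vanishing on $\mA_{n-1}^{(n)}$ factors as $(\tau_n-\t_n)h''$ for some $h''\in\mHT^*(\mA_n)$. For $w$ with $w(n)\neq n$, the value $h''(w):=h'(w)/(\t_{w(n)}-\t_n)$ is forced and lies in $H^*(BT)$, because the edge from $w$ to its unique adjacent vertex in $\mA_{n-1}^{(n)}$ (via the transposition $(w^{-1}(n),n)$) has label $\t_n-\t_{w(n)}$ while $h'$ vanishes there. For $w\in\mA_{n-1}^{(n)}$, one must choose $h''(w)\in H^*(BT)$ satisfying the system of congruences $h''(w)\equiv h''(w\cdot(i,n))\pmod{\t_{w(i)}-\t_n}$ for each $i<n$, together with the divisibility conditions along the edges internal to $\mA_{n-1}^{(n)}$. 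The pairwise consistency $h''(w\cdot(i,n))\equiv h''(w\cdot(j,n))\pmod{(\t_{w(i)}-\t_n,\t_{w(j)}-\t_n)}$ follows by tracing the length-two path $w\cdot(i,n)\to c\to w\cdot(j,n)$ through $c:=(w\cdot(i,n))\cdot(n,j)\notin\mA_{n-1}^{(n)}$, whose edge labels $\t_{w(i)}-\t_{w(j)}$ and $\t_n-\t_{w(i)}$ generate precisely the required ideal, combined with the fact that $h'(c)$ and $h'(w\cdot(j,n))$ are themselves divisible by $\t_{w(j)}-\t_n$ via their adjacencies to $\mA_{n-1}^{(n)}$. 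Since $\{\t_{w(i)}-\t_n\}_{i<n}$ is a regular sequence in $H^*(BT)$, a Chinese Remainder argument produces a consistent lift, and a parallel cycle argument handles compatibility across the internal edges of $\mA_{n-1}^{(n)}$.
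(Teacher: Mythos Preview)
Your injectivity argument via localization is correct and is a valid alternative to the paper's route, which instead computes the Hilbert series of both sides explicitly (Lemmas~\ref{inje} and~\ref{gene}) and compares them. For surjectivity, however, the paper proceeds quite differently: it filters $S_n$ by the $n$ sets $V_q=\{w:w(q)=n\}$ and kills $h$ on them one at a time. If $h$ already vanishes on $V_1\cup\cdots\cup V_{q-1}$, then its restriction to $V_q$ is divisible by $\prod_{k<q}(\tau_k-t_n)$, the quotient lies in $\mHT^*(\mL_q)\cong\mHT^*(\mA_{n-1})$, and induction on $n$ applies directly to that quotient. No step ever requires extending a partially defined element back to a larger graph.

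Your strategy instead isolates the single piece $V_n=\{w:w(n)=n\}$ and tries to factor $h'=(\tau_n-t_n)h''$ globally. The gap is in constructing $h''$ on $V_n$. For each fixed $w\in V_n$ your pairwise--consistency path argument does go through, and CRT yields a value $h''(w)$, but only modulo $\Delta_n:=\prod_{a=1}^{n-1}(t_a-t_n)$. You must then arrange that these choices satisfy the internal congruence $h''(w)\equiv h''(w')\pmod{\ell}$, $\ell=t_{w(i)}-t_{w(j)}$, along each edge $w'=w\cdot(i,j)$ inside $V_n$. The ``parallel cycle'' through $w\cdot(k,n)$ and $w'\cdot(k,n)$ does give $h''(w\cdot(k,n))\equiv h''(w'\cdot(k,n))\pmod{\ell}$ for every $k$, but combining this with the CRT constraints only forces $h''(w)-h''(w')\in\bigcap_{k}(a_k,\ell)$ where $a_k=t_{w(k)}-t_n$; reducing modulo $\ell$ one finds this intersection equals the principal ideal generated by $\prod_{k\neq j}\bar a_k$, which strictly contains the image of $(\Delta_n)$. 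So the constraints you have written down do not force the internal congruence, nor do they show it can be achieved by adjusting each $h''(w)$ within its $\Delta_n$--coset. The divisibility lemma is true, but establishing it combinatorially is essentially as delicate as surjectivity itself; the paper's finer filtration is designed precisely to avoid this extension problem.
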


The rest of this section is devoted to the proof of Theorem~\ref{Atype}.  We first prove the following. 

\begin{lemm} \label{surj}
$\mHT^{*}(\mA_n)$ is generated by $\tau_1,{\scriptstyle\cdots},\tau_n,\t_1,{\scriptstyle\cdots},\t_n$ as a ring. 
\end{lemm}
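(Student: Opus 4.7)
The plan is to argue by induction on $n$, exploiting the natural fibration of $\mA_n$ given by the map $\w\mapsto \w(1)$. The base case $n=1$ is immediate since $\mHT^*(\mA_1)=H^*(BT)=\Z[\t_1]$ and $\tau_1=\t_1$ there. For the inductive step, I partition $V(\mA_n)=S_n$ into the $n$ fibers $F_k:=\{\w\in S_n:\w(1)=k\}$. The induced subgraph on $F_k$ is, as a labeled graph, naturally a copy of $\mA_{n-1}$ built on the variables $\{\t_j:j\neq k\}$, and for $i\geq 2$ the restriction of the global element $\tau_i$ to $F_k$ coincides with the analogous generator of $\mHT^*(\mA_{n-1})$. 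Starting from $h_0:=h$, I would construct iteratively, for $k=1,\dots,n$, elements $\tilde{h}_k\in \mHT^*(\mA_n)$ that are polynomials in $\tau_1,\dots,\tau_n,\t_1,\dots,\t_n$, vanish on $F_1\cup\cdots\cup F_{k-1}$, and agree with $h_{k-1}:=h-\sum_{j<k}\tilde{h}_j$ on $F_k$. Then $h_k:=h_{k-1}-\tilde{h}_k$ vanishes on $F_1\cup\cdots\cup F_k$, so after $n$ steps $h_n\equiv 0$ and $h=\sum_{k=1}^n \tilde{h}_k$ presents $h$ as a polynomial in the required generators.

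The construction of $\tilde{h}_k$ is the technical heart of the argument. The key divisibility observation is: if $h_{k-1}$ vanishes on $F_1\cup\cdots\cup F_{k-1}$, then for each $\w\in F_k$ and each $j<k$, the edge obtained by swapping position $1$ with position $\w^{-1}(j)$ joins $\w$ to a vertex of $F_j$ and carries the label $\t_k-\t_j$, so the GKM condition forces $\t_k-\t_j$ to divide $h_{k-1}(\w)$. Since the linear forms $\{\t_k-\t_j:j<k\}$ are pairwise coprime in the UFD $H^*(BT)$, their product divides $h_{k-1}(\w)$, and the pointwise quotient $Q_k(\w):=h_{k-1}(\w)/\prod_{j<k}(\t_k-\t_j)$ defines an element of $\mHT^*(\mA_{n-1})$ on $F_k$ (the GKM condition for $Q_k$ is preserved because edge labels inside $F_k$ are of the form $\t_a-\t_b$ with $a,b\neq k$, hence coprime to the divisor). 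By the outer induction, $Q_k$ equals $P_k(\tau_2,\dots,\tau_n,\,\t_j:j\neq k)$ for some polynomial $P_k$, and I set
\[
\tilde{h}_k:=\prod_{j<k}(\tau_1-\t_j)\cdot P_k(\tau_2,\dots,\tau_n,\,\t_j:j\neq k)\ \in\ \mHT^*(\mA_n).
\]
On any $F_m$ with $m<k$ the prefactor contains the vanishing factor $\tau_1-\t_m=\t_m-\t_m=0$, while on $F_k$ it contributes precisely the scalar $\prod_{j<k}(\t_k-\t_j)$ that was divided out; hence $\tilde{h}_k$ vanishes on the earlier fibers and agrees with $h_{k-1}$ on $F_k$, as required.

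The only non-routine step is this pointwise divisibility, which relies essentially on $H^*(BT)$ being a UFD so that pairwise coprime linear divisors combine into divisibility by their product. Everything else is mechanical bookkeeping using the fibration, and the outer induction then closes the proof. I anticipate that this same fibration-plus-divisibility template will guide the later sections, although in types $B$ and $D$ the corresponding linear forms will no longer be pairwise coprime over $\Z$ and more delicate arguments will be needed.
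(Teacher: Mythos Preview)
Your argument is correct and follows the same inductive-with-divisibility template as the paper, but with a dual choice of partition: you slice $S_n$ by the \emph{value} $w(1)=k$, whereas the paper slices by the \emph{position} $i$ with $w(i)=n$, setting $V_i:=\{w\in S_n:w(i)=n\}$. Your choice has the pleasant feature that the divisor $\prod_{j<k}(t_k-t_j)$ is a fixed scalar on each fiber $F_k$, so the quotient $Q_k$ inherits the GKM condition on $F_k$ immediately from coprimality. In the paper's version the divisor $\prod_{j<q}(t_{w(j)}-t_n)$ genuinely varies with $w\in V_q$, and they must verify separately (their ``Claim'') that the quotient still satisfies the edge congruences. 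On the other hand, the paper's partition by the preimage of the top value $n$ is what extends naturally to the signed permutation groups in types $B$, $C$, $D$ via $V_i^\pm=\{w:w(i)=\pm n\}$, which is why they chose it.

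One point to tighten: the quotient $Q_k(w)$ lies in $\Z[t_1,\dots,t_n]$ and may involve $t_k$, so the outer induction hypothesis (for $\mA_{n-1}$ on the variables $\{t_j:j\neq k\}$) does not apply to $Q_k$ directly, and your $P_k$ cannot in general be written without $t_k$. The fix is routine and is precisely what the paper does with their extra variable $t_n$: expand $Q_k=\sum_r Q_{k,r}\,t_k^{\,r}$ with $Q_{k,r}(w)\in\Z[t_j:j\neq k]$; since the internal edge labels on $F_k$ avoid $t_k$, each $Q_{k,r}$ still satisfies the GKM condition on $F_k$, so induction applies to each $Q_{k,r}$ separately, and one reassembles. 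Thus $P_k$ should be allowed to use all of $t_1,\dots,t_n$, which is harmless since they are among the permitted generators.
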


\begin{proof}
We shall prove the lemma by induction on $n$. When $n=1$, $\mHT^{*}(\mA_1)$ is generated by $\t_1$ since $\mA_1$ is a point; so the lemma holds. 

Suppose that the lemma holds for $n-1$.  Then it suffices to show that any homogeneous element $h$ of $\mHT^*(\mA_n)$, say of degree $2k$, can be expressed as a polynomial in the $\tau_i$'s and $\t_i$'s.  
For each $i=1,\dots,n$, we set 
$$V_i:=\{ \w \in S_n \mid \w(i)=n\}.$$ 
{The sets $V_i$ give a decomposition of $S_n$ into disjoint subsets}. 
{We consider the full labeled} subgraph $\mL_i$ of $\mA_n$ with $V_i$ as the vertex set, {where the full subgraph means that any edge in $\mA_n$ connecting vertices in $V_i$ lies in $\mL_i$.  
Note that the vertices of $\mL_i$ can naturally be identified with permutations on $\{1,2,\dots,n\}\backslash \{i\}$ and $\mL_i$ is isomorphic to $\mA_{n-1}$ for any $i$}.  
 
Let 
\begin{equation} \label{q}
1\leq q\leq \min\{k+1,n\}
\end{equation}
and assume that 
\begin{equation} \label{Vi}
\text{$h(v)=0$\quad for any {$v \in \bigcup_{i=1}^{q-1}V_i$}}
\end{equation}
{and that $q$ is the minimal integer with the properties \eqref{q} and \eqref{Vi}}. 

Note that a vertex $\w$ in $V_q$ is connected by an edge in $\mA_n$ to a vertex $v$ in $V_i$ for $i\not=q$ if and only if $v=\w\cdot(i,q)$.  In this case $h(\w)-h(v)$ is divisible by $\t_{\w(i)}-\t_{\w(q)}=\t_{\w(i)}-\t_n$ and $h(v)=0$ whenever $i<q$ by \eqref{Vi}, so $h(\w)$ is divisible by $\t_{\w(i)}-\t_n$ for $i<q$. Thus, for each $\w\in V_q$, there is an element $g^q(\w)\in \Z[\t_1,{\scriptstyle\cdots},\t_n]$ such that 
\begin{equation} \label{fw}
h(\w)=(\t_{\w(1)}-\t_n)(\t_{\w(2)}-\t_n) \dots (\t_{\w(q-1)}-\t_n)g^q(\w)
\end{equation}
where $g^q(\w)$ is homogeneous and of degree $2(k+1-q)$ because $h(\w)$ is homogeneous and of degree $2k$.   

One expresses 
\begin{equation} \label{gr}
g^q(\w)=\sum_{r=0}^{k+1-q} g^q_r(\w)\t_n^{r}
\end{equation}
with homogeneous polynomials $g^q_r(w)$ of degree $2(k+1-q-r$) in $\Z[\t_1,{\scriptstyle\cdots},\t_{n-1}]$.  

\begin{claim}
{For each $r$ with $0\le r\le k+1-q$}, there is a polynomial $G^q_r$ in $\tau_i$'s (except $\tau_q$) and $\t_i$'s (except $\t_n$) with integer coefficients such that $G^q_r(\w)=g^q_r(\w)$ for any $\w\in V_q$. 
\end{claim}

\medskip
\noindent
Proof of Claim. 
If the vertex $\w$ in $V_q$ is connected by an edge in $\mA_n$ to a vertex $v$ in $V_q$, then there is an element $(i,j) \in S_n$ such that $v=\w\cdot(i,j)$ where $i$ and $j$ are not equal to $q$. Since $h$ is an element of $\mHT^{*}(\mA_n)$, $h(\w)-h(v)$ has to be divisible by $\t_{\w(i)}-\t_{\w(j)}$, 
{in other words, 
\begin{equation} \label{hwhv1}
h(w)\equiv h(v) \mod {\t_{w(i)}-\t_{w(j)}}.
\end{equation}
}

{
On the other hand, it follows from \eqref{fw} that we have
\begin{equation} \label{hwhv2}
h(w)=g^q(w)\prod_{s=1}^{q-1}(t_{w(s)}-t_n),\quad  h(v)=g^q(v)\prod_{s=1}^{q-1}(t_{v(s)}-t_n).
\end{equation}
Here, since $v=w\cdot (i,j)$, we have $w(i)=v(j)$, $w(j)=v(i)$ and $w(s)=v(s)$ for $s\not=i,j$.  Moreover $w(i)$ and $w(j)$ are not equal to $n$ because $i$ and $j$ are not equal to $q$.  Therefore
\[
\prod_{s=1}^{q-1}(t_{w(s)}-t_n)\equiv \prod_{s=1}^{q-1}(t_{v(s)}-t_n)\nequiv 0 \mod  {\t_{w(i)}-\t_{w(j)}}. 
\]
This together with \eqref{hwhv1} and \eqref{hwhv2} implies that 
\[
g^q(w)\equiv g^q(v) \mod  {\t_{w(i)}-\t_{w(j)}}
\]
and hence
\[
g^q_r(w)\equiv g^q_r(v) \mod  {\t_{w(i)}-\t_{w(j)}}\quad\text{for any $r$}
\]
because $w(i)$ and $w(j)$ are not equal to $n$. Therefore  $g^q_r(\w)-g^q_r(v)$ is divisible by $\t_{\w(i)}-\t_{\w(j)}$ for any $r$.  
This means that $g^q_r$ restricted to $\mL_q$ is an element of $\mHT^{*}(\mL_q)$.  
The vertices of $\mL_q$ can be identified with permutations on $\{1,\dots,n\}\backslash \{q\}$ and hence $\mL_q$ is naturally isomorphic to $\mA_{n-1}$, so the induction assumption on $n$ implies that there is a polynomial $G^q_r$ in $\tau_i$'s (except $\tau_q$) and $\t_i$'s (except $\t_n$) with integer coefficients such that $G^q_r(\w)=g^q_r(\w)$ for any $\w\in V_q=V(\mL_q)$}, proving the claim. 
\medskip

Since $\tau_i(\w)= \t_{\w(i)}$ and $\w(i)=n$ for $\w\in V_i$, we have  
\begin{equation} \label{prod}
\prod_{j=1}^{q-1} (\tau_j-\t_n)(\w) =0 \quad\text{for any {$\w \in \bigcup_{i=1}^{q-1}V_i$}.} 
\end{equation}
Therefore, it follows from \eqref{fw}, \eqref{gr}, the claim above and \eqref{prod} that putting $G^q=\sum_{r=0}^{k+1-q}G^q_r\t_n^r$, we have 
\begin{equation*}
\begin{split}
\big(h-G^q\prod_{j=1}^{q-1} (\tau_j -\t_n)\big)(\w)
=&h(\w)-g^q(\w)\prod_{j=1}^{q-1} (\t_{\w(j)}-\t_n)\\
=&0\qquad\text{for any {$\w \in \bigcup_{i=1}^qV_i$}.}
\end{split}
\end{equation*}
Therefore, subtracting the polynomial $G^q\prod_{j=1}^{q-1} (\tau_j -\t_n)$ from $h$, we may assume that 
\[
\text{$h(v)=0$\quad for any {$v \in \bigcup_{i=1}^qV_i$}. }
\]
The above argument implies that $h$ finally takes zero on all vertices of $\mA_n$ (which means $h=0$) by subtracting polynomials in $\tau_i$'s and $\t_i$'s with integer coefficients, and this completes the induction step. 
\end{proof}

{Let $k$ be a commutative ring. We take $k=\Z$ or $\Z[\frac{1}{2}]$ later.  Remember that the Hilbert series of a graded $k$-algebra $A^*=\bigoplus_{j=0}^\infty A^j$, where $A^j$ is the degree $j$ part of $A^*$ and assumed to be of finite rank over $k$, is a formal power series defined by 
\[
F(A^*,s):=\sum_{j=0}^\infty(\rank_{k} A^j)s^j.
\]
}

\begin{lemm} \label{inje}
$F(\mHT^*(\mA_n),s)={\frac{1}{(1-s^2)^{2n}}} \prod_{i=1}^n(1-s^{2i})$. 
\end{lemm}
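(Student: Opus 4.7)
The plan is to upgrade the inductive structure of the proof of Lemma~\ref{surj} into a short exact sequence computation of Hilbert series. For each $q=1,\ldots,n+1$ set
$$K_q:=\{h\in\mHT^*(\mA_n):h(v)=0\text{ for all }v\in V_1\cup\cdots\cup V_{q-1}\},$$
so that $K_1=\mHT^*(\mA_n)$ and $K_{n+1}=0$. The proof of Lemma~\ref{surj} has already established that for every $h\in K_q$ of degree $2k$ and every $\w\in V_q$ there is a unique polynomial $g^q(\w)\in\Z[\t_1,\ldots,\t_n]$ with $h(\w)=\prod_{i=1}^{q-1}(\t_{\w(i)}-\t_n)\,g^q(\w)$; moreover the Claim proved there shows that when $g^q(\w)=\sum_r g^q_r(\w)\t_n^r$ with $g^q_r(\w)\in\Z[\t_1,\ldots,\t_{n-1}]$, each function $g^q_r$ on $V_q$ defines an element of $\mHT^*(\mL_q)$, where $\mL_q\cong\mA_{n-1}$ is the full subgraph on $V_q$. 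Consequently $h\mapsto g^q$ furnishes a graded $H^*(BT)$-linear map
$$\phi_q\colon K_q\longrightarrow \mHT^*(\mA_{n-1})\otimes_\Z\Z[\t_n]$$
that lowers degree by $2(q-1)$.

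Next I would verify that $\phi_q$ is surjective with kernel $K_{q+1}$. The kernel is immediate: $g^q=0$ forces $h|_{V_q}=0$, which combined with $h\in K_q$ puts $h$ in $K_{q+1}$. Surjectivity is exactly the lifting step used inside the proof of Lemma~\ref{surj}: by the induction hypothesis on $n$, any element of $\mHT^*(\mA_{n-1})\otimes\Z[\t_n]$ can be written as a polynomial $G^q$ in the $\tau_i$'s with $i\ne q$ and the $\t_i$'s, and then $G^q\cdot\prod_{j=1}^{q-1}(\tau_j-\t_n)$ is an element of $\mHT^*(\mA_n)$; the factor $(\tau_j-\t_n)$ vanishes identically on $V_j$, so the product lies in $K_q$ and is sent by $\phi_q$ back to the chosen target element. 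The resulting short exact sequence
$$0\longrightarrow K_{q+1}\longrightarrow K_q\stackrel{\phi_q}{\longrightarrow}\mHT^*(\mA_{n-1})\otimes\Z[\t_n]\longrightarrow 0,$$
with the degree shift recorded, translates into
$$F(K_q,s)-F(K_{q+1},s)=\frac{s^{2(q-1)}}{1-s^2}\,F(\mHT^*(\mA_{n-1}),s),$$
using $F(\Z[\t_n],s)=1/(1-s^2)$.

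Summing telescopically over $q=1,\ldots,n$ yields the recursion
$$F(\mHT^*(\mA_n),s)=\frac{1-s^{2n}}{(1-s^2)^2}\,F(\mHT^*(\mA_{n-1}),s),$$
and the base case $F(\mHT^*(\mA_1),s)=1/(1-s^2)$ (since $\mA_1$ is a single vertex) is trivially compatible with the target formula. Iterating then produces
$$F(\mHT^*(\mA_n),s)=\frac{\prod_{i=1}^n(1-s^{2i})}{(1-s^2)^{2n}},$$
as claimed. The main obstacle is pinning down the surjectivity of $\phi_q$ together with the matching identification of its target as $\mHT^*(\mA_{n-1})\otimes_\Z\Z[\t_n]$; once the Bruhat-like stratification $S_n=\bigsqcup V_i$ is properly organised, the remaining Hilbert-series manipulation is routine.
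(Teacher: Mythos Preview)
Your proposal is correct and follows essentially the same route as the paper: the filtration $K_q$ coincides with the paper's $F_{q-1}^*$, your map $\phi_q$ and its image identification reproduce exactly what the paper extracts from the Claim in the proof of Lemma~\ref{surj}, and your surjectivity argument via the lift $G^q\prod_{j=1}^{q-1}(\tau_j-t_n)$ is precisely what the paper means by ``$g^q_r$ can be chosen arbitrarily.'' The only difference is cosmetic---you phrase the graded rank computation as a short exact sequence and telescope directly to the recursion $F_n(s)=\frac{1-s^{2n}}{(1-s^2)^2}F_{n-1}(s)$, whereas the paper first unwinds the same information into the combinatorial identity~\eqref{dnk} for $d_n(k)$ before reassembling it into that recursion.
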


\begin{proof}
We first note that $\mHT^{*}(\mA_n)$ is free over $\Z$ because it is a submodule of $\bigoplus_{w\in S_n}H^*(BT)$.  Let $d_n(k):=\rank_\mathbb{Z} \mHT^{2k}(\mA_n)$.  
Then 
\begin{equation} \label{mHmG}
F(\mHT^*(\mA_n),s) = \sum_{k=0}^\infty d_n(k)s^{2k}.
\end{equation}

{For $q$ with $0\le q\le k+1$, we set 
\[
F^{2k}_q=\{h\in H^{2k}_T(\mA_n)\mid h(w)=0\text{ for any $w\in \bigcup_{i=1}^qV_i$}\}.
\] 
Then we have a filtration 
\[
H^{2k}_T(\mA_n)=F_0^{2k}\supset F_1^{2k}\supset \dots \supset F_k^{2k}\supset F_{k+1}^{2k}=0
\]
and since $g^q_r$ in \eqref{gr} belongs to $\mHT^{2(k+1-q-r)}(\mL_q)=\mHT^{2(k+1-q-r)}(\mA_{n-1})$ as shown in the claim and $g^q_r$ can be chosen arbitrarily, we have 
\[
\rank_\Z F_q^{2k}-\rank_\Z F_{q-1}^{2k}=\sum_{r=0}^{k+1-q}d_{n-1}(k+1-q-r)=\sum_{r=0}^{k+1-q}d_{n-1}(r).
\]
Therefore, noting \eqref{q}, we have 
\begin{equation} \label{eq:dnk}
d_n(k)=\sum_{q=1}^{\min\{k+1,n\}}\sum_{r=0}^{k+1-q}d_{n-1}(r). 
\end{equation}
If we set $d_{n-1}(j)=0$ for $j<0$, then an elementary computation shows that \eqref{eq:dnk} reduces to}  
\begin{equation} \label{dnk}
d_n(k)=\begin{cases} \sum_{i=1}^n i\cdot d_{n-1}(k+1-i)\quad &\text{if $k\le n-1$,}\\
\sum_{i=1}^n i\cdot d_{n-1}(k+1-i)+n\sum_{i=n+1}^{k+1}d_{n-1}(k+1-i)\quad &\text{if $k\ge n$.}
\end{cases}
\end{equation}
We shall abbreviate $F(\mHT^*(\mA_n),s)$ as $F_n(s)$. Then, plugging \eqref{dnk} in \eqref{mHmG}, we obtain
\begin{equation*} \label{Fns}
\begin{split}
F_n(s)           =& \sum_{k=0}^{\infty}\Big(d_{n-1}(k)+2d_{n-1}(k-1)+{\scriptstyle\cdots}+nd_{n-1}(k+1-n)\Big)s^{2k}  \\
            &  + n\sum_{k=n}^\infty \Big(d_{n-1}(k-n)+{\scriptstyle\cdots} + d_{n-1}(1)+d_{n-1}(0)\Big) s^{2k}\\
           =& F_{n-1}(s)+2s^2F_{n-1}(s)+{\scriptstyle\cdots} +ns^{2n-2}F_{n-1}(s) \\
            &  +n\Big(d_{n-1}(0)s^{2n}\frac{1}{1-s^2}+d_{n-1}(1)s^{2n+2}\frac{1}{1-s^2}+{\scriptstyle\cdots}\Big) \\
           =& F_{n-1}(s)\Big(1+2s^2+{\scriptstyle\cdots} +ns^{2n-2}\Big)+n{\frac{s^{2n}}{1-s^2}} F_{n-1}(s) \\
           =& {\frac{1-s^{2n}}{(1-s^2)^2}}F_{n-1}(s).
           \end{split}
\end{equation*}
On the other hand, $F_1(s)=1/(1-s^2)$ since $\mHT^*(\mA_1)=\Z[\t_1]$. Therefore the lemma follows. 
\end{proof}

We abbreviate the polynomial ring $\mathbb{Z}[\tau_1,{\scriptstyle\cdots},\tau_n,\t_1,{\scriptstyle\cdots},\t_n]$ as $\Z[\tau,\t]$.  The canonical map $\Z[\tau,\t]\to H^*_T(\mA_n)$ is a {degree-preserving} homomorphism which is surjective by Lemma~\ref{surj}.  Let $e_i(\tau)$ (resp. $e_i(\t)$) denote the $i^{th}$ elementary symmetric polynomial in $\tau_1,{\scriptstyle\cdots},\tau_n$ (resp. $\t_1,{\scriptstyle\cdots},\t_n$).  It easily follows from \eqref{tau} that $e_i(\tau)=e_i(\t)$ for $i=1,{\scriptstyle\cdots},n$.  Therefore the canonical map above induces a {degree-preserving}  epimorphism 
\begin{equation} \label{cano}
\mT^*:=\Z[\tau,\t]/\big(e_i(\tau)-e_i(\t) \mid i=1,{\scriptstyle\dots},n\big) \to \mHT^*(\mA_n). 
\end{equation}
We note that $\mT^*$ is a $\Z[t]$-module in a natural way. 

\begin{lemm} \label{gene}
$\mT^*$ is generated by $\{\prod_{p=1}^{n-1}\tau_p^{i_p}\mid i_p\le n-p\}$ as a $\Z[t]$-module.
\end{lemm}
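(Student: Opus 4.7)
The plan is to produce, for each $p$ with $1\le p\le n$, a monic polynomial $\tilde Q_p(X)$ of degree $n-p+1$ with coefficients in $\Z[\tau_1,\dots,\tau_{p-1},\t_1,\dots,\t_n]$ such that $\tilde Q_p(\tau_p)=0$ in $\mT^*$. Such a relation expresses $\tau_p^{n-p+1}$ as a $\Z[\tau_1,\dots,\tau_{p-1},\t]$-linear combination of $\tau_p^k$ with $k\le n-p$, and iterating these reductions (for varying $p$) will bring any monomial into the claimed staircase form.

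To produce $\tilde Q_p(X)$, observe that the defining relations $e_i(\tau)=e_i(\t)$ assemble into the single polynomial identity $\prod_{i=1}^n(X-\tau_i)=\prod_{i=1}^n(X-\t_i)$ in $\mT^*[X]$. Inside the polynomial ring $\Z[\tau_1,\dots,\tau_{p-1},\t][X]$, perform the polynomial long division of $P(X):=\prod_{i=1}^n(X-\t_i)$ by the monic polynomial $D_p(X):=\prod_{j=1}^{p-1}(X-\tau_j)$; this yields a unique expression $P(X)=\tilde Q_p(X)D_p(X)+\tilde R_p(X)$ with $\deg\tilde Q_p=n-p+1$ and $\deg\tilde R_p<p-1$. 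On the other hand, in $\mT^*[X]$ we also have $P(X)=D_p(X)\cdot\prod_{i=p}^n(X-\tau_i)$, so subtracting the two expressions gives
\[
D_p(X)\bigl[\tilde Q_p(X)-\textstyle\prod_{i=p}^n(X-\tau_i)\bigr]=-\tilde R_p(X)\quad\text{in }\mT^*[X].
\]
Since $D_p(X)$ is monic of degree $p-1$ while $\deg\tilde R_p<p-1$, a leading-coefficient comparison forces both sides to vanish, giving $\tilde Q_p(X)=\prod_{i=p}^n(X-\tau_i)$ in $\mT^*[X]$. Evaluating at $X=\tau_p$ then yields the desired relation $\tilde Q_p(\tau_p)=\prod_{i=p}^n(\tau_p-\tau_i)=0$ in $\mT^*$.

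For the reduction, I proceed as follows. Given a monomial $\tau_1^{a_1}\cdots\tau_n^{a_n}$ with some $a_p>n-p$, pick the largest such $p$ and apply $\tilde Q_p(\tau_p)=0$: this replaces $\tau_p^{n-p+1}$ by a $\Z[\tau_1,\dots,\tau_{p-1},\t]$-combination of $\tau_p^k$ with $k\le n-p$. The resulting monomials share the exponents $a_{p+1},\dots,a_n$ but have strictly smaller exponent of $\tau_p$ (the exponents of $\tau_1,\dots,\tau_{p-1}$ may grow). Ordering monomials by the lexicographic order on the reversed exponent tuple $(a_n,a_{n-1},\dots,a_1)$, each reduction strictly decreases this well-ordering, so the process terminates. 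The terminal monomials satisfy $a_p\le n-p$ for every $p$, in particular $a_n=0$, which matches the claimed staircase generators $\prod_{p=1}^{n-1}\tau_p^{i_p}$ with $i_p\le n-p$.

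The main technical point of this plan is the degree-counting step identifying $\tilde Q_p(X)$ (which a priori is produced abstractly by polynomial long division in $\Z[\tau_{<p},\t][X]$) with $\prod_{i=p}^n(X-\tau_i)$ inside $\mT^*[X]$; the remaining steps (polynomial long division, termination via the lex order, and iteration) are then formal.
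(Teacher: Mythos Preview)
Your proof is correct and reaches the same intermediate goal as the paper---namely, exhibiting for each $p$ a monic relation of degree $n-p+1$ for $\tau_p$ over $\Z[\tau_1,\dots,\tau_{p-1},\t]$---but by a genuinely different and cleaner route. The paper derives this relation explicitly via generating-function manipulations with complete symmetric polynomials: it expands $\prod_{i\le p}(1-\tau_i x)^{-1}$ two ways, extracts the coefficient of $x^{n+1-p}$ to obtain an identity involving $h_{n+1-p}(\tau_1,\dots,\tau_p)$ and the $e_i(\tau_{p+1},\dots,\tau_n)$, and then recursively eliminates the latter using $e_i(\tau)=e_i(\t)$. You instead observe that $\prod_i(X-\tau_i)=\prod_i(X-\t_i)$ in $\mT^*[X]$, perform Euclidean division of $\prod_i(X-\t_i)$ by the monic divisor $\prod_{j<p}(X-\tau_j)$ inside $\Z[\tau_{<p},\t][X]$, and use uniqueness of quotient and remainder (valid over any commutative ring when the divisor is monic) to identify the quotient $\tilde Q_p$ with $\prod_{i\ge p}(X-\tau_i)$; evaluating at $X=\tau_p$ gives the relation immediately. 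Your argument avoids all the symmetric-function bookkeeping; the paper's approach, on the other hand, produces explicit formulas, and its computational style is what gets adapted in the later type $B$, $C$, $D$ sections. Your termination argument via the reverse-lexicographic well-order on $(a_n,\dots,a_1)$ is also spelled out more carefully than the paper's, which simply asserts that the relations suffice.
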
 

\begin{proof}
Clearly the elements $\prod_{p=1}^{n-1}\tau_p^{i_p}$, with no restriction on exponents $i_p$, generate $\mT^*$ as a $\mathbb{Z}[\t]$-module.
Therefore, it suffices to prove that $\tau_p^{n-p+1}$ can be expressed as a polynomial in $\tau_1,\dots,\tau_p$ and $t_i$'s with the exponent of $\tau_p$ less than or equal to $n-p$.   

Let $h_i(\t)$ (resp. $h_i(\tau)$) be the $i^{th}$ complete symmetric polynomial in $\t_1,{\scriptstyle \cdots},\t_n$ (resp. $\tau_1,{\scriptstyle \cdots} ,\tau_n$) and $h_0(\t)=e_0(\t)=1$.  Since $e_i(\tau)=e_i(\t)$ for any $i$, we have 
\begin{equation*}
\prod_{i=1}^{n}(1-{\tau_i} x)=\prod_{i=1}^{n}(1-{\t_i} x)
\end{equation*}
where $x$ is an indeterminate.  It follows that  
\begin{equation} 
\begin{split}
\sum_{i\geq 0}h_{i}(\tau_1,{\scriptstyle \cdots},\tau_{p})x^i &= \prod_{i=1}^{p}{\frac{1}{1-\tau_ix}} \label{et1} \\
&= \prod_{i=p+1}^{n}(1-\tau_ix)\prod_{i=1}^{n}{\frac{1}{1-{\t_i}x}} \\ 
&= \Big(\sum_{i=0}^{n-p}(-1)^ie_i(\tau_{p+1},{\scriptstyle \cdots},\tau_n)x^i\Big)\Big(\sum_{i\geq 0}h_{i}(\t)x^i\Big). \\ 
\end{split}
\end{equation}
Comparing coefficients of $x^{n+1-p}$ in \eqref{et1}, we have
\begin{equation}\label{heh}
h_{n+1-p}(\tau_1,{\scriptstyle \cdots},\tau_p)=\sum_{i=0}^{n-p}(-1)^ie_i(\tau_{p+1},{\scriptstyle \cdots},\tau_n)h_{n+1-p-i}(\t)
\end{equation}
while it easily follows from the definition of $h_i$ that  
\begin{equation}\label{comp}
h_{n+1-p}(\tau_1,{\scriptstyle \cdots},\tau_p)=\tau_p^{n+1-p}+\sum_{i=0}^{n-p}\tau_p^i\cdot h_{n+1-p-i}(\tau_1,{\scriptstyle \cdots},\tau_{p-1}).
\end{equation}
By \eqref{heh} and \eqref{comp} we have 
\begin{equation} \label{tauA}
\begin{split}
\tau_p^{n+1-p} &= -\sum_{i=0}^{n-p}\tau_p^i\cdot h_{n+1-p-i}(\tau_1,{\scriptstyle \cdots},\tau_{p-1}) \\
                 & \   \  \  +\sum_{i=0}^{n-p}(-1)^ie_i(\tau_{p+1},{\scriptstyle \cdots},\tau_n)h_{n+1-p-i}(\t). 
\end{split}
\end{equation}

On the other hand, it follows from $e_i(\tau)=e_i(\t)$ that  
\begin{equation*}
\sum_{j=0}^{i}e_j(\tau_1,{\scriptstyle \cdots},\tau_p)e_{i-j}(\tau_{p+1},{\scriptstyle \cdots},\tau_n)= e_i(\t) \text{$\ \ \ \ \ $for any $i$},
\end{equation*}
that is,
\begin{equation*}
e_i(\tau_{p+1},{\scriptstyle \cdots},\tau_n)=e_i(t)-\sum_{j=1}^{i}e_j(\tau_1,{\scriptstyle \cdots},\tau_p)e_{i-j}(\tau_{p+1},{\scriptstyle \cdots},\tau_n) \text{$\ \ \ \ \ $for any $i$}.
\end{equation*}
Thus one obtains
\begin{eqnarray*}
e_1(\tau_{p+1},{\scriptstyle \cdots},\tau_n) &=& e_1(t)-e_1(\tau_1,{\scriptstyle \cdots},\tau_p)\\
e_2(\tau_{p+1},{\scriptstyle \cdots},\tau_n) &=& e_2(t)-e_2(\tau_1,{\scriptstyle \cdots},\tau_p)-e_1(\tau_1,{\scriptstyle \cdots},\tau_p)e_1(\tau_{p+1},{\scriptstyle \cdots},\tau_n) \\
    &=& e_2(t)-e_2(\tau_1,{\scriptstyle \cdots},\tau_p)-e_1(\tau_1,{\scriptstyle \cdots},\tau_p) \big(e_1(t)-e_1(\tau_1,{\scriptstyle \cdots},\tau_p)\big), 
\end{eqnarray*}
and so on.
This shows that $e_i(\tau_{p+1},{\scriptstyle \cdots},\tau_n)$ can be written as a linear combination of  $\prod_{k=1}^{p}\tau_k^{i_k}$, with $i_k \leq i$, over $\mathbb{Z}[\t]$. 
Therefore, it follows from \eqref{tauA} that $\tau_p^{n+1-p}$ is written as a polynomial in $\tau_1,{\scriptstyle \cdots},\tau_p$ and $\t_i$'s with the  exponent of $\tau_p$ less than or equal to $n-p$.
\end{proof}

Now we are in a position to complete the proof of Theorem~\ref{Atype}.  

\begin{proof}[Proof of Theorem~\ref{Atype}]  
If two formal power series $a(s)=\sum_{i=0}^\infty a_is^i$ and $b(s)=\sum_{i=0}^\infty b_is^i$ with real coefficients $a_i$ and $b_i$ satisfy $a_i\le b_i$ for every $i$, then we express this as $a(s)\le b(s)$.  

The Hilbert series of the free $\Z[t]$-module generated by $\prod_{k=1}^{n-1}\tau_k^{i_k}$ is given by $\frac{1}{(1-s^2)^n}s^{2\sum_{k=1}^{n-1}i_k}$, so it follows from Lemma~\ref{gene} that 
\begin{equation*} 
F(\mT^*,s)\le \frac{1}{(1-s^2)^n}\sum_{0\le i_k\le n-k}s^{2\sum_{k=1}^{n-1}i_k}
\end{equation*}
and the equality above holds if and only if 
generators $\prod_{p=1}^{n-1} \tau_p^{i_p}$ with $i_p \leq n-p$ are linearly independent over $\mathbb{Z}[t]$.
 Here the right hand side above is equal to 
\begin{eqnarray*}
\frac{1}{(1-s^2)^n} \sum_{0 \leq i_k \leq n-k} \Big(\prod_{k=1}^{n-1} s^{2i_k} \Big) &=&
\frac{1}{(1-s^2)^n} \prod_{k=1}^{n-1} \Big( \sum_{0 \leq i_k \leq n-k} s^{2i_k} \Big) \\
&=&
\frac{1}{(1-s^2)^n}\prod_{q=1}^{n-1}(1+s^2+\dots+s^{2q}) \\
&=& \frac{1}{(1-s^2)^{2n}}\prod_{i=1}^n(1-s^{2i})\\                                
\end{eqnarray*}
which agrees with $F(\mHT^*(\mA_n),s)$ by Lemma~\ref{inje}.  Therefore $F(\mT^*,s)\le F(\mHT^*(\mA_n),s)$.  On the other hand, the surjectivity of the map \eqref{cano} implies the opposite inequality.  Therefore  $F(\mT^*,s)=F(\mHT^*(\mA_n),s)$.  
Since the map \eqref{cano} is surjective and $F(\mT^*,s)=F(\mHT^*(\mA_n),s)$, we conclude that the map \eqref{cano} is actually an isomorphism.  This proves Theorem~\ref{Atype}.
\end{proof}

\section{Type $C_n$}  \label{sectC}

The argument developed in Section~\ref{sect3} works for the case of type $C_n$ with a little modification.  In this section we shall state the result and mention necessary changes in the argument.  

The root system $\Phi(C_n)$ of type $C_n$ is given by  
\begin{equation} \label{rootCn}
\Phi(C_{n})=\{\pm(\t_i + \t_j), \ \pm(\t_i - \t_j), \ \pm 2\t_k \mid 1\le i<j\le n,\ 1\le k\le n\}
\end{equation}
and its Weyl group is the signed permutation group on $\pm[n]:=\{\pm 1,\dots,\pm n\}$, which we denote by $\tilde S_n$.  Namely $w\in \tilde S_n$ permutes elements in $\pm [n]$ up to sign.  Again we use the one-line notation $w=w(1)w(2)\dots w(n)$. The number of elements in $\tilde S_n$ is $2^nn!$.  

Let $\mC_n$ be the labeled graph associated with the root system $\Phi(C_n)$.  It has $\tilde S_n$ as vertices and two vertices $w,w'\in \tilde S_n$ are connected by an edge $e_{w,w'}$ if and only if one of the following occurs:
\begin{enumerate}
\item there is a pair $\{i,j\}\subset [n]$ such that 
\[
\text{$(\w'(i),\w'(j))=\pm(\w(j),\w(i))$\quad and\quad $\w'(r)=\w(r)$\ \  for $r\ (\neq i,j)\in [n]$,}
\]
\item there is an $i\in [n]$ such that 
\[
\text{$\w'(i)=-\w(i)$\quad and\quad $\w'(r)=\w(r)$\ \ for $r\ (\neq i)\in [n]$.}
\]
\end{enumerate}
We understand 
\[
\t_{-m}:=-\t_m \quad\text{for a positive integer $m$}.
\]
Then the edge $e_{\w,\w'}$ is labeled by $\t_{\w(i)}-\t_{\w'(i)}$ in case (1) above and by $2\t_{\w(i)}$ in case (2) above, and the elements $\tau_i$ and $\t_i$ for $i=1,\dots,n$ defined by 
\begin{equation} \label{tauC}
\text{$\tau_i(\w):=\t_{\w(i)}$\quad and\quad $\t_i(\w):=\t_i$}
\end{equation}
belong to $\mHT^2(\mC_n)$.    

{If $M_n$ is a flag manifold of type $C_n$, then the restriction map
\[
H^*_T(M_n) \to \bigoplus_{w\in \tilde S_n}H^*(BT)
\]
is injective and the image is known to be described as 
\[
\mathbb{Z}[\tau_1,{\scriptstyle\cdots},\tau_n,\t_1,{\scriptstyle\cdots},\t_n]/
(e_i({\tau}^2)-e_i({\t}^2) \mid i=1,{\scriptstyle\cdots},n),
\]
where $e_i({\tau}^2)$ {\rm (}resp. $e_i({\t}^2)${\rm )} is the $i^{th}$ elementary symmetric polynomial in ${\tau_1}^2,{\scriptstyle \cdots},{\tau_n}^2$ {\rm (}resp. ${\t_1}^2,{\scriptscriptstyle\cdots},{\t_n}^2${\rm )}, see \cite[Chapter 6]{fu-pr98}. 
So, one may expect that $H^*_T(\mC_n)$ is generated by $\tau_1,\dots,\tau_n,\t_1,\dots,\t_n$ as a ring, but this is not true in general as shown in the following example.  This fact was pointed out by T. Ikeda, L. C. Mihalcea and H. Naruse.  
}

{
\begin{exam}
Take $n=2$.  One can check that $h\in \Map(\tilde S_2,H^*(BT))$ defined by 
\[
h(v)=\begin{cases} 0 \ &\text{if $v(1)=2$, $v(2)=2$ or $(v(1),v(2))=(-2,1)$}\\
-2t_2(t_1-t_2)(t_1+t_2)\ &\text{if $(v(1),v(2))=(1,-2)$}\\
2t_2^2(t_1+t_2)\ &\text{if $(v(1),v(2))=(-1,-2)$}\\
2t_1t_2(t_1+t_2)\ &\text{if $(v(1),v(2))=(-2,-1)$}
\end{cases}
\]
is an element of $H^*_T(\mC_2)$, see Figure~\ref{exam1}.
{\centering{
\begin{figure}[h]
\includegraphics[width=100mm]{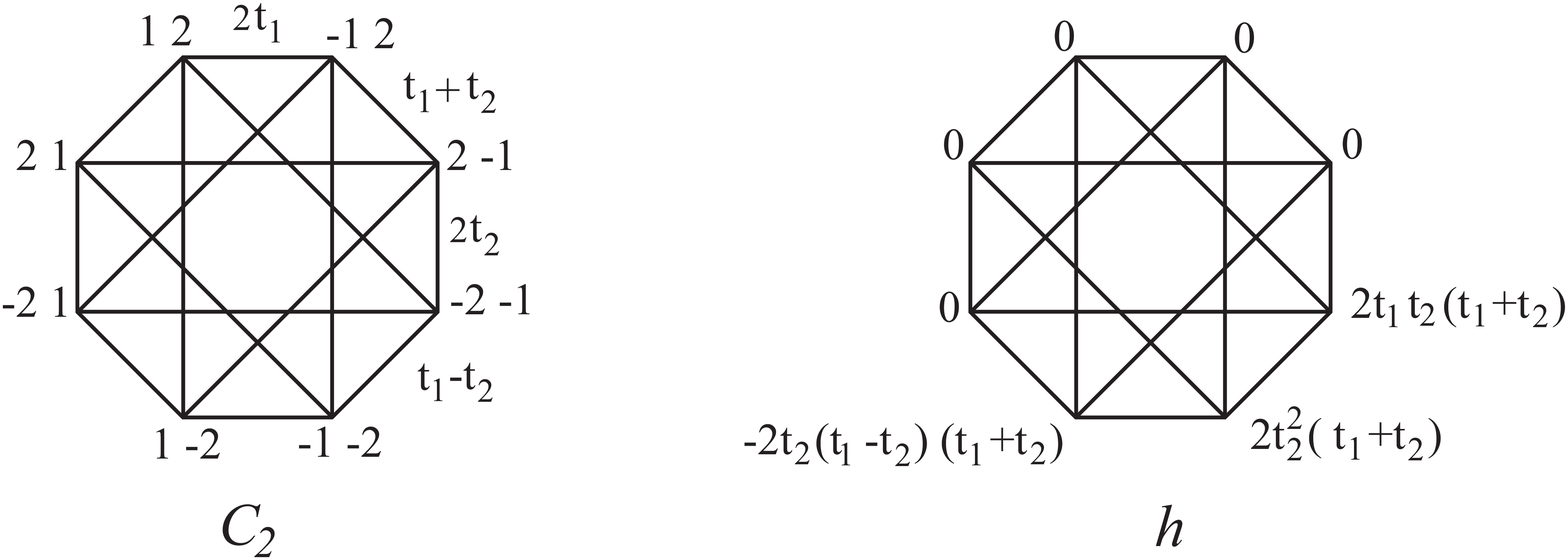}\caption{}\label{exam1}
\end{figure}
}}
In fact, the element $h$ agrees with   
$$\frac{1}{2}(\tau_1-t_2)(\tau_2-t_2)(\tau_1-\tau_2+t_1+t_2)$$
and this shows that $h$ is not a polynomial in $\tau_1,\tau_2,\t_1,\t_2$ over $\Z$. 
\end{exam}
}

{The problem is caused by the presence of the factor $2$ in the root system \eqref{rootCn} and 
if we work over $\Z[\frac{1}{2}]$ instead of $\Z$, then the argument developed in the previous section works with a little modification and we obtain the following.} 

\begin{theo} \label{Ctype}
Let $\mC_n$ be the labeled graph associated with the root system $\Phi(C_{n})$ of type $C_n$ as above. Then 
\begin{equation*} 
\mHT^{*}(\mC_n){\otimes\Z[\frac{1}{2}]}=\mathbb{Z}{[\frac{1}{2}]}[\tau_1,{\scriptstyle\cdots},\tau_n,\t_1,{\scriptstyle\cdots},\t_n]/
(e_i({\tau}^2)-e_i({\t}^2) \mid i=1,{\scriptstyle\cdots},n),
\end{equation*}
where $e_i({\tau}^2)$ {\rm (}resp. $e_i({\t}^2)${\rm )} is the $i^{th}$ elementary symmetric polynomial in ${\tau_1}^2,{\scriptstyle \cdots},{\tau_n}^2$ {\rm (}resp. ${\t_1}^2,{\scriptscriptstyle\cdots},{\t_n}^2${\rm )}.
\end{theo}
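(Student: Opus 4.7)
The plan is to adapt the proof of Theorem~\ref{Atype} essentially verbatim, proving three parallel statements: (i) $\mHT^*(\mC_n)\otimes\Z[\frac{1}{2}]$ is generated as a ring by $\tau_1,\dots,\tau_n,t_1,\dots,t_n$; (ii) its Hilbert series equals $\frac{1}{(1-s^2)^{2n}}\prod_{i=1}^n(1-s^{4i})$; and (iii) the right-hand side ring in the theorem is generated as a $\Z[\frac{1}{2}][t]$-module by the monomials $\prod_{p=1}^n\tau_p^{i_p}$ with $0\le i_p\le 2(n-p)+1$, yielding the same Hilbert series as an upper bound. Combining the canonical surjection supplied by (i) with the Hilbert series match forced by (ii) and (iii) will then give the theorem.

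For (i), I would partition $\tilde S_n=\bigsqcup_{q=1}^n V_q$ where $V_q:=\{w\mid|w(q)|=n\}$, decomposing further $V_q=V_q^+\sqcup V_q^-$ with $V_q^\epsilon:=\{w\mid w(q)=\epsilon n\}$. Each $V_q^\epsilon$ carries an induced $\mC_{n-1}$-structure under the obvious identification with $\tilde S_{n-1}$, and $V_q^+$ is joined to $V_q^-$ by the type (2) edges at position $q$, all labeled $\pm 2t_n$. Suppose $h\in\mHT^{2k}(\mC_n)\otimes\Z[\frac{1}{2}]$ vanishes on $V_1\cup\dots\cup V_{q-1}$. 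For $w\in V_q$, the two type (1) edges from $w$ to vertices of $V_j$ ($j<q$), namely the sign-preserved and sign-flipped transpositions between positions $q$ and $j$, force $h(w)$ to be divisible by $(t_n-t_{w(j)})(t_n+t_{w(j)})=t_n^2-t_{w(j)}^2$. Since these factors are pairwise coprime (the $|w(j)|$ are distinct),
\[ h(w)=\prod_{j<q}(t_n^2-t_{w(j)}^2)\,g^q(w). \]
Expanding $g^q(w)=\sum_{r\ge 0} g^q_r(w)\,t_n^r$ with $g^q_r(w)\in\Z[\frac{1}{2}][t_1,\dots,t_{n-1}]$, the edges internal to each $V_q^\epsilon$ force $g^q_r|_{V_q^\epsilon}\in\mHT^*(\mC_{n-1})\otimes\Z[\frac{1}{2}]$, while the cross-edges labeled $2t_n$ give $g^q(w^+)\equiv g^q(w^-)\pmod{2t_n}$, which over $\Z[\frac{1}{2}]$ collapses to the single condition $g^q_0(w^+)=g^q_0(w^-)$.

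By induction on $n$, each $g^q_r|_{V_q^\epsilon}$ admits a polynomial lift $A_r^\epsilon$ in the $\tau_i,t_i$. The two halves are reassembled into a single polynomial via the interpolation
\[ G^q_r:=\tfrac{1}{2}(A_r^++A_r^-)+\tfrac{1}{2t_n}(A_r^+-A_r^-)\,\tau_q, \]
which takes value $A_r^+$ on $V_q^+$ and $A_r^-$ on $V_q^-$ since $\tau_q=\pm t_n$ there, and which is a legitimate polynomial because $A_r^+-A_r^-$ is divisible by $2t_n$ (only the $r=0$ condition is a genuine constraint; for $r\ge 1$ the factor of $2$ is absorbed into $\tfrac{1}{2}$). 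Subtracting $\big(\sum_r G^q_r\,t_n^r\big)\cdot\prod_{j<q}(\tau_j^2-t_n^2)$ from $h$ kills its restriction to $V_q$ without disturbing the earlier vanishing, completing the induction. The rank count attached to this argument delivers (ii): each ``level'' $q$ contributes $d_{n-1}(m)+2\sum_{r=1}^m d_{n-1}(m-r)$ to $d_n(k)$ at $m=k-2(q-1)$, and the generating-function manipulation exactly parallel to Lemma~\ref{inje} produces the recursion $F_n(s)=\frac{1-s^{4n}}{(1-s^2)^2}\,F_{n-1}(s)$ with base $F_1(s)=\frac{1-s^4}{(1-s^2)^2}$. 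For (iii), substituting $s_i:=\tau_i^2$ turns the relations $e_i(\tau^2)=e_i(t^2)$ into the type-A relations in $s_i$ and $t_i^2$, so Lemma~\ref{gene} applied to the $s_i$ shows $s_p^{n-p+1}=\tau_p^{2(n-p+1)}$ is expressible modulo monomials of lower $\tau_p$-exponent; hence the stated monomials generate, and their Hilbert series upper bound matches the formula in (ii).

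The principal difficulty is the interpolation step, for the example preceding the theorem already shows that generation (i) fails over $\Z$: the $\frac{1}{2}$ appearing in $G^q_r$ is essential and traces directly to the $2t_n$ label on the edges joining $V_q^+$ to $V_q^-$. Once this step is granted, everything else is a mechanical transcription of the type A argument, with $(t_n^2-t_{w(j)}^2)$ replacing $(t_{w(j)}-t_n)$ and a parity index $\epsilon=\pm$ recording the sign of $w(q)$.
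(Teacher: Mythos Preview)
Your overall plan is sound and parallels the paper's, though organised differently: the paper first sweeps through $V_1^+,\dots,V_n^+$ (subtracting correctors of the form $G_+^q\prod_{k<q}(\tau_k-t_n)$) and only afterwards through $V_1^-,\dots,V_n^-$ (using $G_-^q\prod_{k=1}^{n}(\tau_k-t_n)\prod_{l<q}(\tau_l+t_n)$), whereas you bundle $V_q^+\cup V_q^-$ together at each stage via an interpolation in $\tau_q$. Your generating set in (iii), $\{\prod_{p=1}^{n}\tau_p^{i_p}:i_p\le 2(n-p)+1\}$, and the Hilbert-series recursion in (ii) are both correct.

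There is, however, a genuine gap in the interpolation step. The lifts $A_r^\epsilon$ you produce by induction are polynomials in the $\tau_i$ for $i\neq q$ and in $t_1,\dots,t_{n-1}$ only; they do not involve $t_n$. Hence $A_r^+-A_r^-$ is divisible by $t_n$ if and only if $A_r^+=A_r^-$. The cross-edge congruence you correctly isolate forces $g^q_0(w^+)=g^q_0(w^-)$, so one may \emph{choose} $A_0^+=A_0^-$; but for $r\ge 1$ there is no constraint relating $g^q_r|_{V_q^+}$ to $g^q_r|_{V_q^-}$, so $A_r^+-A_r^-$ need not vanish and your $G^q_r$ is simply not a polynomial. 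The parenthetical remark about ``the factor of $2$ being absorbed into $\tfrac12$'' confuses the harmless $2$ with the genuine $t_n$ in the denominator, which working over $\Z[\tfrac12]$ does not remove. The fix is to interpolate the assembled $A^\epsilon:=\sum_{r\ge 0}A_r^\epsilon\,t_n^r$ rather than each coefficient: having chosen $A_0^+=A_0^-$, one has $A^+-A^-=\sum_{r\ge 1}(A_r^+-A_r^-)t_n^r$, which \emph{is} divisible by $t_n$, and then
\[
G^q:=\tfrac12(A^++A^-)+\tfrac{1}{2t_n}(A^+-A^-)\,\tau_q
\]
is a legitimate element of $\Z[\tfrac12][\tau,t]$ with $G^q|_{V_q^\epsilon}=A^\epsilon$. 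With this repair your surjectivity argument, the rank count for $F_{q-1}/F_q$, and the ensuing recursion $F_n(s)=\dfrac{1-s^{4n}}{(1-s^2)^2}F_{n-1}(s)$ all go through as you intend.
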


The proof of Theorem~\ref{Ctype} is almost same as that of Theorem~\ref{Atype} {and we shall outline it}.  First we prove the following. 

\begin{lemm} \label{surjC}
$\mHT^{*}(\mC_n){\otimes \Z[\frac{1}{2}]}$ is generated by $\tau_1,{\scriptstyle\cdots},\tau_n,\t_1,{\scriptstyle\cdots},\t_n$ as a ring. 
\end{lemm}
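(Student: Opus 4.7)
My plan is to mimic the inductive argument of Lemma~\ref{surj}, replacing $\mA_n$ by $\mC_n$ and the permutation group $S_n$ by the signed permutation group $\tilde S_n$.  The base case $n=1$ is immediate: the graph $\mC_1$ has two vertices joined by a single edge labeled $2\t_1$, and after inverting $2$ this edge is equivalent to one with label $\t_1$, so a direct computation shows that $\tau_1$ and $\t_1$ generate $\mHT^*(\mC_1)\otimes\Z[\frac12]$.  For the inductive step, fix a homogeneous $h\in\mHT^{2k}(\mC_n)\otimes\Z[\frac12]$ and decompose $\tilde S_n=\bigsqcup_{i\in\pm[n]}V_i$ with $V_i:=\{w\in\tilde S_n\mid w(i)=n\}$, giving $2n$ blocks.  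Each full subgraph $\mL_i$ on $V_i$ is naturally isomorphic to $\mC_{n-1}$, since fixing the position of $n$ leaves the other $n-1$ positions free to carry an arbitrary signed permutation of the remaining values together with every signed-swap, unsigned-swap and sign-change edge between them.

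I would process the blocks in the order $V_1,\dots,V_n,V_{-n},\dots,V_{-1}$, at each stage assuming inductively that $h$ vanishes on all earlier blocks and subtracting off a polynomial in the $\tau$'s and $\t$'s that kills the current block without disturbing those already handled.  For a positive block $V_q$ the situation mirrors type~$A$: the only edges from $w\in V_q$ to previously killed vertices are unsigned swaps at $(j,q)$ with $j<q$, whose pairwise coprime labels $\t_{w(j)}-\t_n$ give $h(w)=g^q(w)\prod_{j<q}(\t_{w(j)}-\t_n)$.  The same prefactor-versus-edge-label calculation as in the proof of Lemma~\ref{surj}, now also applied to the new signed-swap and sign-change edges internal to $\mL_q$ (and using that over $\Z[\frac12]$ a sign-change label $2\t_{w(i)}$ imposes the same divisibility as $\t_{w(i)}$), shows that the coefficients of the $\t_n$-expansion of $g^q$ lie in $\mHT^*(\mL_q)\otimes\Z[\frac12]\cong\mHT^*(\mC_{n-1})\otimes\Z[\frac12]$ and lift by induction to polynomials $G^q_r$; the killing polynomial is then $G^q\prod_{j<q}(\tau_j-\t_n)$.

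The genuinely new phenomenon appears when processing a negative block $V_{-q}$.  Now a vertex $w\in V_{-q}$ connects to previously killed blocks through three distinct channels: a sign change at position $q$ reaching $V_q$ (label $2\t_n$, hence $\t_n$ over $\Z[\frac12]$), signed swaps at $(i,q)$ for every $i\ne q$ reaching $V_i$ (labels $\t_{w(i)}-\t_n$), and unsigned swaps at $(i,q)$ for $i>q$ reaching $V_{-i}$ (labels $\t_{w(i)}+\t_n$).  Combining these coprime divisibilities yields $h(w)=\t_n\prod_{i\ne q}(\t_{w(i)}-\t_n)\prod_{i>q}(\t_{w(i)}+\t_n)\cdot g^{-q}(w)$, and the analogous analysis again shows $g^{-q}$ defines an element of $\mHT^*(\mL_{-q})\otimes\Z[\frac12]$ that lifts to a polynomial $G^{-q}$ by induction.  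The killing polynomial must vanish on every previously handled $V_j$ ($j>0$) and $V_{-j}$ ($j>q$), forcing the shape $G^{-q}\prod_{j=1}^n(\tau_j-\t_n)\prod_{j=q+1}^n(\tau_j+\t_n)$.

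The main obstacle, and the sole place where inverting $2$ is really essential, appears in this last matching step.  On $V_{-q}$ the factor $\tau_q-\t_n$ of the killing polynomial evaluates to $-2\t_n$, since $\tau_q(w)=-\t_n$ there, so matching $h|_{V_{-q}}$ forces one to define $G^{-q}$ as $-\tfrac12$ times the polynomial produced by induction.  The example immediately preceding the theorem shows that this division cannot be avoided over $\Z$, so the coefficient ring really must be $\Z[\frac12]$.  Once all $2n$ blocks have been killed, the original $h$ has been written as a polynomial in $\tau_1,\dots,\tau_n,\t_1,\dots,\t_n$ with coefficients in $\Z[\frac12]$, completing the induction.
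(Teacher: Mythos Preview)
Your argument is correct and follows essentially the same approach as the paper's proof: both decompose $\tilde S_n$ into the $2n$ blocks $V_i^\pm=\{w\mid w(i)=\pm n\}$, kill the positive blocks first exactly as in type~$A$, and then kill the negative blocks using a product $\prod_{k=1}^n(\tau_k-\t_n)$ together with appropriate factors $(\tau_l+\t_n)$, absorbing the unavoidable factor of $2$ from $\tau_q-\t_n\big|_{V_q^-}=-2\t_n$ into the $\Z[\tfrac12]$ coefficients. The only difference is the order in which you process the negative blocks ($V_{-n},\dots,V_{-1}$ versus the paper's $V_1^-,\dots,V_n^-$), which merely changes whether the extra factors are $\prod_{j>q}(\tau_j+\t_n)$ or $\prod_{l<q}(\tau_l+\t_n)$; this is purely cosmetic.
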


\begin{proof}
The proof goes as in Lemma~\ref{surj}. When $n=1$, $\mC_1$ has only one edge with vertices $1$ and $-1$, and the label of the edge is $2\t_1$.  Since $\tau_1(\pm 1)=\pm \t_1$, it is easy to check that the {lemma} holds when $n=1$. 
 
The key step in the proof of Lemma~\ref{surj} was that if $h\in \mHT^*(\mA_n)$ vanishes on $V_i$ for $i<q$, then one could modify $h$ so that it vanishes on $V_i$ for $i<q+1$ by subtracting a {polynomial in $\tau_i$'s and $\t_i$'s with integer coefficients} from $h$, where the polynomial was of the form $G^q\prod_{i=1}^{q-1}(\tau_i-\t_n)$. In the case of type $C_n$, we consider 
$$V_i^\pm:=\{\w\in \tilde S_n \mid \w(i)=\pm n \}$$ 
and the {full labeled} subgraph $\mL_i^\pm$ of $\mC_n$ with $V_i^\pm$ as the vertex set, where $\mL_i^+$ and $\mL_i^-$  are both isomorphic to $\mC_{n-1}$ for each $i=1,\dots,n$. 

The same argument as in the case of type $A_{n-1}$ shows that if $h\in \mHT^*(\mC_n)$ vanishes on $V_i^+$ for $i<q$, then one can modify $h$ so that it vanishes on $V_i^+$ for $i<q+1$ by subtracting from $h$ a  polynomial of the form $G_+^q\prod_{k=1}^{q-1}(\tau_k-\t_n)$ in $\tau_i$'s and $\t_i$'s {with coefficients in $\Z[\frac{1}{2}]$}.  Moreover, if $h$ vanishes on all $V_i^+$  and $V_j^-$ for $j<q$ with some $q\geq 1$, then one can modify $h$ so that it vanishes on all $V_i^+$ and $V_j^-$ for $j<q+1$ by subtracting from $h$ a polynomial in $\tau_i$'s and $\t_i$'s {with coefficients in $\Z[\frac{1}{2}]$} of the form $G_-^q \prod_{k=1}^{n}(\tau_k-\t_n) \prod_{l=1}^{q-1}(\tau_l+\t_n)$.  Therefore we finally reach an element which vanishes on all $V_i^\pm$ by subtracting polynomials in $\tau_i$'s and $\t_i$'s {with coefficients in $\Z[\frac{1}{2}]$} from $h$, and this proves the lemma. 
\end{proof}  

It easily follows from \eqref{tauC} that $e_i(\tau^2)=e_i(\t^2)$ for $i=1,{\scriptstyle\cdots},n$.  Therefore we have a {degree-}preserving epimorphism 
\begin{equation} \label{canoC}
\Z{[\frac{1}{2}]}[\tau,\t]/\big(e_i(\tau^2)-e_i(\t^2) \mid i=1,{\scriptstyle\dots},n\big) \to \mHT^*(\mC_n){\otimes\Z[\frac{1}{2}]}
\end{equation}
and the same argument as in Lemma~\ref{gene} proves the following. 

\begin{lemm} \label{geneC}
The left hand side in \eqref{canoC} is generated by $\prod_{k=1}^{n-1}\tau_k^{i_k}$ with $i_k\le 2(n-k)$ as a $\Z{[\frac{1}{2}]}[t]$-module.
\end{lemm}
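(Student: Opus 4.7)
The plan is to follow the proof of Lemma~\ref{gene} almost word for word, working throughout with the squares $\tau_i^2$ and $\t_i^2$ in place of $\tau_i$ and $\t_i$. Writing $R$ for the left hand side of \eqref{canoC}, the monomials $\prod_{k=1}^{n-1}\tau_k^{i_k}$ (with no restriction on exponents) clearly generate $R$ as a $\Z[\frac{1}{2}][\t]$-module, so the only point to check is that for each $p$ the power $\tau_p^{2(n+1-p)}$ can be rewritten as a polynomial in $\tau_1,\dots,\tau_p$ and the $\t_i$'s in which $\tau_p$ appears to exponent at most $2(n-p)$.

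The starting point is the identity
\[
\prod_{i=1}^n(1-\tau_i^2 x) = \prod_{i=1}^n(1-\t_i^2 x),
\]
which follows from the defining relations $e_i(\tau^2)=e_i(\t^2)$. Arguing exactly as in the derivation of \eqref{et1}, one obtains
\[
\sum_{i\ge 0}h_i(\tau_1^2,\dots,\tau_p^2)x^i = \Bigl(\sum_{i=0}^{n-p}(-1)^i e_i(\tau_{p+1}^2,\dots,\tau_n^2)x^i\Bigr)\Bigl(\sum_{i\ge 0}h_i(\t_1^2,\dots,\t_n^2)x^i\Bigr).
\]
Comparing coefficients of $x^{n+1-p}$ and using the expansion
\[
h_{n+1-p}(\tau_1^2,\dots,\tau_p^2) = \tau_p^{2(n+1-p)} + \sum_{i=0}^{n-p}\tau_p^{2i}\,h_{n+1-p-i}(\tau_1^2,\dots,\tau_{p-1}^2)
\]
(the analog of \eqref{comp}) yields the analog of \eqref{tauA}, expressing $\tau_p^{2(n+1-p)}$ in terms of lower even powers of $\tau_p$, the complete symmetric functions $h_\bullet(\t^2)$, and the polynomials $e_i(\tau_{p+1}^2,\dots,\tau_n^2)$.

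The $e_i(\tau_{p+1}^2,\dots,\tau_n^2)$ are then eliminated by the same recursion used in Lemma~\ref{gene}: rewriting $e_i(\tau^2)=e_i(\t^2)$ as $\sum_{j=0}^i e_j(\tau_1^2,\dots,\tau_p^2)\,e_{i-j}(\tau_{p+1}^2,\dots,\tau_n^2) = e_i(\t^2)$ and solving recursively expresses each $e_i(\tau_{p+1}^2,\dots,\tau_n^2)$ as a $\Z[\frac{1}{2}][\t]$-linear combination of products $\prod_{k=1}^p\tau_k^{2j_k}$ with $j_k\le i$. Substituting this back gives the required reduction.

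I do not anticipate any essential obstacle; the computation is parallel to Lemma~\ref{gene}, just with squares throughout. The factor $\frac{1}{2}$ in the coefficient ring is not actually needed for the present lemma --- no denominators arise in these symmetric-function manipulations --- which matches the fact that $\frac{1}{2}$ entered only in Lemma~\ref{surjC}. The one small point to watch is that the key identity naturally reduces only the even powers of $\tau_p$; the odd powers are then handled by multiplying the reduction of $\tau_p^{2(n+1-p)}$ through by $\tau_p$, which propagates the bound to all exponents of $\tau_p$.
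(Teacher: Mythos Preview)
Your approach --- run the proof of Lemma~\ref{gene} verbatim with $\tau_i,\t_i$ replaced by $\tau_i^2,\t_i^2$ --- is exactly what the paper has in mind, and the symmetric-function manipulations you outline go through without change. The gap is in your opening sentence. The claim that the monomials $\prod_{k=1}^{n-1}\tau_k^{i_k}$ (with no bound) already generate $R$ over $\Z[\frac{1}{2}][\t]$ is \emph{not} clear here, and is in fact false. In type~$A$ that step is justified by the linear relation $e_1(\tau)=e_1(\t)$, which lets one eliminate $\tau_n$; in type~$C$ the relations $e_i(\tau^2)=e_i(\t^2)$ are all of even degree, so one can eliminate $\tau_n^2$ but not $\tau_n$ itself. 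Your own closing remark confirms this: reducing $\tau_p^{2(n+1-p)}$ and then ``multiplying through by $\tau_p$'' produces the bound $i_p\le 2(n-p)+1$, not $2(n-p)$.

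What the argument actually proves is that $R$ is generated as a $\Z[\frac{1}{2}][\t]$-module by the monomials $\prod_{k=1}^{n}\tau_k^{i_k}$ with $i_k\le 2(n-k)+1$. This is the correct statement: the number of such monomials is $\prod_{k=1}^{n}\bigl(2(n-k)+2\bigr)=2^n n!=|\tilde S_n|$, and the associated Hilbert-series upper bound is $\frac{1}{(1-s^2)^{2n}}\prod_{i=1}^n(1-s^{4i})$, matching $F(\mHT^*(\mC_n)\otimes\Z[\frac{1}{2}],s)$. By contrast, the printed bound $i_k\le 2(n-k)$ with the product only up to $n-1$ gives just $(2n-1)!!$ monomials --- far too few --- so the lemma as stated contains an indexing slip carried over from the type-$A$ formulation. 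Your argument, once you drop the unjustified first sentence and keep track of the odd exponents as you do at the end, proves the corrected version.
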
 

Then, comparing the Hilbert series of the both sides in \eqref{canoC}, we see that the map \eqref{canoC} is an isomorphism.  The details are left to the reader.

\section{Type $B_n$}\label{sectB}

In this section we treat type $B_n$. The root system $\Phi(B_n)$ of type $B_n$ is given by  
\begin{equation} \label{rootBn}
\Phi(B_{n})=\{\pm(\t_i + \t_j), \ \pm(\t_i - \t_j), \ \pm\t_k \mid 1\le i<j\le n,\ 1\le k\le n\}
\end{equation}
and its Weyl group is the same as that of type $C_n$, i.e. the signed permutation group $\tilde S_n$.  

Let $\mB_n$ be the labeled graph associated with the root system $\Phi(B_n)$.  This labeled graph has the same vertices and edges as $\mC_n$.  Their labels are almost same.  The only difference is that the edge $e_{\w,\w'}$ with $\w,\w'$ such that $\w'(i)=-\w(i)$ for some $i\in [n]$ and $\w'(r)=\w(r)$ for $r\ (\neq i)\in [n]$ is labeled by $\t_{\w(i)}$ in $\mB_n$ while it is labeled by $2\t_{\w(i)}$ in $\mC_n$. 

We define $\tau_i$ and $\t_i$ for $i=1,\dots,n$ by \eqref{tauC}.  They belong to $\mHT^2(\mB_n)$.  As remarked above, the only difference between $\mB_n$ and $\mC_n$ is the factor $2$ in the labels on the edges $e_{\w,\w'}$ mentioned above.  Therefore, if we work over $\Z[\frac{1}{2}]$ instead of $\Z$, then the same argument as in the case of type $C_n$ proves the following. 

\begin{lemm} \label{1/2}
\[
\mHT^{*}(\mB_n)\otimes\Z[\frac{1}{2}]=\mathbb{Z}[\frac{1}{2}][\tau_1,{\scriptstyle\cdots},\tau_n,\t_1,{\scriptstyle\cdots},\t_n]/
(e_i({\tau}^2)-e_i({\t}^2) \mid i=1,{\scriptstyle\cdots},n).
\]
\end{lemm}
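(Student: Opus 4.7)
The plan is to deduce Lemma~\ref{1/2} directly from Theorem~\ref{Ctype}. The key observation is that $\mHT^*(\mB_n)\otimes\Z[\frac{1}{2}]$ and $\mHT^*(\mC_n)\otimes\Z[\frac{1}{2}]$ coincide as subrings of $\bigoplus_{w\in\tilde S_n}H^*(BT)\otimes\Z[\frac{1}{2}]$, after which the claimed presentation is exactly the content of Theorem~\ref{Ctype}.

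To establish this coincidence I would begin by recalling, as emphasized in the paragraph preceding the lemma, that $\mB_n$ and $\mC_n$ have the same vertex set, the same edge set, and the same labels on every edge except the ``sign-change'' edges of case (2) in Section~\ref{sectC}; on those edges $\mB_n$ carries the label $\t_{w(i)}$ whereas $\mC_n$ carries $2\t_{w(i)}$. Since $2$ is a unit in $\Z[\frac{1}{2}]$, the principal ideals $(\t_{w(i)})$ and $(2\t_{w(i)})$ of $\Z[\frac{1}{2}][\t_1,\dots,\t_n]$ coincide, so the two divisibility conditions on these edges become equivalent once the coefficient ring is enlarged. Combined with the flatness of $\Z[\frac{1}{2}]$ over $\Z$, which identifies $\mHT^*(\mG)\otimes\Z[\frac{1}{2}]$ with the subring of $\bigoplus_{v\in V(\mG)} H^*(BT)\otimes\Z[\frac{1}{2}]$ cut out by the same divisibility conditions read in the enlarged coefficients, this forces the two graph cohomology rings to agree after tensoring with $\Z[\frac{1}{2}]$.

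Applying Theorem~\ref{Ctype} then completes the argument, since the generators $\tau_i,\t_i$ defined in \eqref{tauC} and the symmetric function relations $e_i(\tau^2)=e_i(\t^2)$ are built in exactly the same way from $\mB_n$ and $\mC_n$. There is no serious obstacle; the only point requiring a moment of care is the flat base change step that identifies $\mHT^*(\mG)\otimes\Z[\frac{1}{2}]$ with the graph cohomology computed over the enlarged coefficient ring, and this is essentially formal.

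If one preferred a self-contained proof rather than a reduction, the alternative would be to replay the proof of Theorem~\ref{Ctype} verbatim with $\mB_n$, $V_i^\pm$, $\mL_i^\pm$ in place of their $\mC_n$ counterparts, invoking the surjectivity argument of Lemma~\ref{surjC} and the Hilbert series / generating-set comparison of Lemma~\ref{geneC}. The factor of $2$ that appeared in the sign-change labels of $\mC_n$ never obstructs any of those steps once one works over $\Z[\frac{1}{2}]$, so the whole argument carries across unchanged.
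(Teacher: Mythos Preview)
Your proposal is correct. The paper's own justification is the single sentence preceding the lemma: ``the same argument as in the case of type $C_n$ proves the following,'' i.e.\ it asks the reader to replay the proof of Theorem~\ref{Ctype} (Lemma~\ref{surjC}, Lemma~\ref{geneC}, and the Hilbert series comparison) with $\mB_n$ in place of $\mC_n$ over $\Z[\frac{1}{2}]$. That is exactly your stated alternative. Your primary route is slightly different and a bit cleaner: rather than replaying the argument, you observe directly that the defining divisibility conditions for $\mB_n$ and $\mC_n$ coincide once $2$ is inverted, so by flat base change $\mHT^*(\mB_n)\otimes\Z[\frac{1}{2}]=\mHT^*(\mC_n)\otimes\Z[\frac{1}{2}]$ as subrings of $\bigoplus_{w\in\tilde S_n}H^*(BT)\otimes\Z[\frac{1}{2}]$, and then you simply cite Theorem~\ref{Ctype}. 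This buys you a one-line reduction in place of a reproduced proof; the paper's phrasing, on the other hand, makes the argument independent of how Theorem~\ref{Ctype} was packaged. Either way there is no gap.
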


The above lemma is not true without tensoring with $\Z[\frac{1}{2}]$.  We need to introduce another family of elements to generate $\mHT^*(\mB_n)$ as a ring.  Since $e_i(\tau)(\w)\equiv e_i(\t)(\w) \pmod{2}$ for any $\w$ in $\tilde S_n$, $e_i(\tau)-e_i(\t)$ is divisible by $2$ and one sees that   
\begin{equation*}
f_i :=(e_i(\tau)-e_i(\t))/2
\end{equation*}
is actually an element of $\mHT^{*}(\mB_n)$. Note that $f_0=0$ since $e_0=1$ by definition. The purpose of this section is to prove the following. 

\begin{theo} \label{Btype}
Let $\mB_n$ be the labeled graph associated with the root system $\Phi(B_{n})$ of type $B_n$ in \eqref{rootBn}. Then 
\begin{equation*} 
\mHT^{*}(\mB_n)=\mathbb{Z}[\tau_1,{\scriptstyle\cdots},\tau_n,\t_1,{\scriptstyle\cdots},\t_n ,f_1 ,{\scriptstyle\cdots},f_n]/I
\end{equation*}
where $I$ is the ideal generated by 
\[
\begin{split}
& 2f_i-e_i(\tau)+e_i(\t ) \quad (i=1,{\scriptstyle\cdots},n),\\
&\sum_{j=1}^{2k}(-1)^jf_j(f_{2k-j}+e_{2k-j}(\t)) \quad (k=1,{\scriptstyle\cdots},n)
\end{split}
\]
where $f_\ell=e_\ell(t)=0$ for $\ell>n$. 
\end{theo}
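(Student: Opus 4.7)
The plan is to follow the three-step strategy of Theorems~\ref{Atype} and~\ref{Ctype}: (i) check that the canonical degree-preserving homomorphism $\Phi\colon \Z[\tau,t,f]/I \to \mHT^{*}(\mB_n)$ is well defined; (ii) show that $\Phi$ is surjective; and (iii) compare Hilbert series of both sides to conclude that $\Phi$ is an isomorphism.

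For step (i) the first family of relations is just the definition of $f_i$. For the quadratic family, the key observation is that $e_k(\tau^2)=e_k(t^2)$ holds in $\mHT^{*}(\mB_n)$, because $\tau_i(w)^{2}=t_{w(i)}^{2}=t_{|w(i)|}^{2}$ and $\{|w(i)|\}_{i=1}^{n}=[n]$. Equivalently,
\[
P(x)P(-x)=Q(x)Q(-x),\qquad P(x):=\prod_{i}(1-\tau_i x),\ \ Q(x):=\prod_{i}(1-t_i x).
\]
Writing $P(x)=Q(x)+2F(x)$ with $F(x):=\sum_{i\geq 0}(-1)^{i}f_i x^{i}$, expanding $P(x)P(-x)-Q(x)Q(-x)=0$, dividing by $2$ twice (permissible because $\mHT^{*}(\mB_n)$ is $\Z$-torsion free) and extracting the coefficient of $x^{2k}$ gives exactly
\[
\sum_{j=1}^{2k}(-1)^{j}f_j\bigl(f_{2k-j}+e_{2k-j}(t)\bigr)=0,
\]
using $f_0=0$ to eliminate the $j=0$ contribution.

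Step (ii) is the main obstacle and requires an integral refinement of the inductive scheme in Lemma~\ref{surjC}. Stratify the vertex set by $V_{i}^{\pm}:=\{w\in\tilde S_n: w(i)=\pm n\}$ and write $\mL_{i}^{\pm}$ for the corresponding full labeled subgraphs, each isomorphic to $\mB_{n-1}$. By induction on $n$, I show that if $h\in\mHT^{*}(\mB_n)$ vanishes on $\bigcup_{i<q}V_{i}^{+}$ then one can subtract from it a polynomial in $\tau_i,t_i,f_i$ with integer coefficients of the form $G_{+}^{q}\prod_{k=1}^{q-1}(\tau_k-t_n)$ so that the new element vanishes on $\bigcup_{i\leq q}V_{i}^{+}$; and similarly for the $V_{j}^{-}$ strata once every $V_{i}^{+}$ has been cleared, using corrections of the form $G_{-}^{q}\prod_{k=1}^{n}(\tau_k-t_n)\prod_{\ell=1}^{q-1}(\tau_\ell+t_n)$. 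The argument over $\Z[\tfrac12]$ (as in Lemma~\ref{surjC}) produces such $G_{\pm}^{q}$ only up to a factor $2$, essentially because the sign-flip edges of $\mB_n$ carry label $t_n$, which is half of the type-$C_n$ label $2t_n$. The role of the generators $f_i$ is precisely to absorb these factors of $2$ via $2f_i=e_i(\tau)-e_i(t)$, making each correction polynomial integral; checking this is possible in each inductive step is the technical heart of the proof.

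Step (iii) is then mechanical. Compute $F(\mHT^{*}(\mB_n),s)$ by the same filtration-by-$V_{i}^{\pm}$ argument used in Lemma~\ref{inje}, getting a recursion that reduces to $F(\mHT^{*}(\mB_{n-1}),s)$. On the algebraic side, use the two families of relations to show that $\Z[\tau,t,f]/I$ is generated as a $\Z[t]$-module by monomials $\prod_{p}\tau_{p}^{a_p}\prod_{q}f_{q}^{b_q}$ with explicitly bounded exponents, in the spirit of Lemma~\ref{gene}: the first family expresses $e_i(\tau)$ in terms of lower-degree $\tau$-monomials and $f_i$, and the quadratic family bounds the $f_i$-exponents. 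Comparing the resulting Hilbert series bound with $F(\mHT^{*}(\mB_n),s)$ and using the surjectivity of $\Phi$ forces equality, so $\Phi$ is an isomorphism.
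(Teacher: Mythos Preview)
Your three-step strategy matches the paper's, and step (i) is correct. In step (ii), however, your description of the correction on the $V_q^{-}$ strata is slightly off. For $w\in V_q^{-}$ one has $(\tau_q-t_n)(w)=-2t_n$, whereas the sign-flip edge in $\mB_n$ carries label $t_n$; hence $h(w)$ is divisible only by $\frac{1}{2}\prod_{k=1}^{n}(\tau_k-t_n)(w)$, not by the full product. The factor of $2$ therefore sits in the product, not in $G_-^{q}$. The paper's key device is the identity
\[
\frac{1}{2}\prod_{k=1}^{n}(\tau_k-t_n)=\sum_{k=1}^{n}(-1)^{n-k}f_k\,t_n^{\,n-k},
\]
obtained by substituting $e_k(\tau)=2f_k+e_k(t)$ and using $\prod_k(t_k-t_n)=0$; this exhibits the half-product itself as an integer polynomial in the $f_i$ and $t_i$. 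The corrected subtraction is then $G_-^{q}\bigl(\sum_{k}(-1)^{n-k}f_kt_n^{\,n-k}\bigr)\prod_{\ell=1}^{q-1}(\tau_\ell+t_n)$. You also need a lemma expressing the $f_i$ of $\mL_q^{\pm}\cong\mB_{n-1}$ as integer polynomials in the $f_i$ of $\mB_n$, so that the inductive hypothesis actually yields $G_\pm^{q}\in\Z[\tau,t,f]$.

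Step (iii) has a genuine gap. The quadratic relations give
\[
f_k^{2}=(-1)^{k+1}\Bigl(2\sum_{i=1}^{k-1}(-1)^{i}f_if_{2k-i}+\sum_{i=1}^{2k}(-1)^{i}f_ie_{2k-i}(t)\Bigr),
\]
and the factor $2$ on the cross terms means that over $\Z$ this does \emph{not} reduce $f_k^{2}$ to monomials with smaller $f$-exponents; your claimed exponent bound on the $f_q$ holds only over $\Z/2$. Consequently a direct Hilbert-series comparison over $\Z$ fails: surjectivity plus equal Hilbert series would not even rule out torsion in $\Z[\tau,t,f]/I$. The paper closes this gap by first proving that $\Z[\tau,t,f]/I$ is $\Z$-free: odd torsion is excluded by the $\Z[\tfrac12]$ isomorphism (Lemma~\ref{1/2}), and $2$-torsion is excluded by establishing the $f$-exponent bound over $\Z/2$ and comparing $\Z/2$-Hilbert series. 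Only after freeness is known does the integral Hilbert-series comparison give the isomorphism.
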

\begin{rema}
If we set $t_1=\dots=t_n=0$, then the right hand side of the identity in Theorem~\ref{Btype} reduces to 
\[
\mathbb{Z}[\tau_1,{\scriptstyle\cdots},\tau_n,f_1 ,{\scriptstyle\cdots},f_n]/J
\end{equation*}
where $J$ is the ideal generated by 
\[
 2f_i-e_i(\tau) \quad (i=1,{\scriptstyle\cdots},n),\qquad \sum_{j=1}^{2k-1}(-1)^jf_jf_{2k-j}+f_{2k} \quad (k=1,{\scriptstyle\cdots},n)
\]
where $f_\ell=0$ for $\ell>n$, and this agrees with the ordinary cohomology ring of the flag manifold of type $B_n$, see \cite[Theorem 2.1]{to-wa74}. 
\end{rema}
The idea of the proof of Theorem~\ref{Btype} is same as before but the argument becomes more complicated because of the elements $f_i$'s.  We first observe relations between $f_ i$'s in $\mHT^*(\mB_n)$ and those in $\mHT^*(\mB_{n-1})$.

\begin{lemm} \label{fn}
For $\w$ in $\tilde S_{n}$ with $\w(q)=\pm n$, let ${\w}'$ be an element in $\tilde S_{n-1}$ represented by $\w(1) \cdots \w(q-1) \w(q+1) \cdots \w(n)$.  We denote $f_i$ in $\mHT^*(\mB_n)$ by $f_i^{(n)}$.  Then
\begin{eqnarray}
f_{i}^{(n-1)}({\w}')=\Bigg\{ 
\begin{array}{ll}
\sum_{j=0}^{i-1}f_{i-j}^{(n)}(\w)(-\t_n)^j & \text{if $\w(q) =n $,} \nonumber \\
\sum_{j=0}^{i-1}f_{i-j}^{(n)}(\w)t_n^j+\sum_{j=1}^{i}e_{i-j}(\t_1,{\scriptstyle \cdots},\t_{n-1})\t_n^j & \text{if $\w(q) = -n $}. \nonumber \\
\end{array} 
\end{eqnarray}
\end{lemm}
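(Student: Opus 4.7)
The plan is to compute both sides directly from the definition $f_i^{(n)}=(e_i(\tau)-e_i(t))/2$, exploiting the one-variable splitting
\[
e_i(x_1,\ldots,x_n)=e_i(x_1,\ldots,x_{n-1})+x_n\,e_{i-1}(x_1,\ldots,x_{n-1})
\]
to separate off the variable $t_n$ on the $t$-side and the variable $t_{w(q)}=\pm t_n$ on the $\tau$-side. Note that after evaluating at $w$, the multiset $\{\tau_j(w)\mid j\ne q\}=\{t_{w(j)}\mid j\ne q\}$ coincides with $\{\tau_j(w')\mid j=1,\ldots,n-1\}$, so
\[
e_{i-1}(\tau_1(w'),\ldots,\tau_{n-1}(w'))=2f_{i-1}^{(n-1)}(w')+e_{i-1}(t_1,\ldots,t_{n-1}).
\]

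Applying the splitting to both $e_i(\tau)(w)$ and $e_i(t_1,\ldots,t_n)$ and dividing by $2$, I would obtain in the case $w(q)=n$ (so $t_{w(q)}=t_n$) the one-step recurrence
\[
f_i^{(n)}(w)=f_i^{(n-1)}(w')+t_n\,f_{i-1}^{(n-1)}(w'),
\]
equivalently $f_i^{(n-1)}(w')=f_i^{(n)}(w)-t_n\,f_{i-1}^{(n-1)}(w')$. In the case $w(q)=-n$, the sign convention $t_{-n}=-t_n$ contributes an extra inhomogeneous term, giving
\[
f_i^{(n)}(w)=f_i^{(n-1)}(w')-t_n\bigl(f_{i-1}^{(n-1)}(w')+e_{i-1}(t_1,\ldots,t_{n-1})\bigr),
\]
i.e.\ $f_i^{(n-1)}(w')=f_i^{(n)}(w)+t_n\,f_{i-1}^{(n-1)}(w')+t_n\,e_{i-1}(t_1,\ldots,t_{n-1})$.

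To finish I would induct on $i$, using the base case $f_0=0$. In the case $w(q)=n$, iterating the recurrence $i$ times and collecting terms yields
\[
f_i^{(n-1)}(w')=\sum_{j=0}^{i-1}(-t_n)^j\,f_{i-j}^{(n)}(w),
\]
which is precisely the claimed first formula. In the case $w(q)=-n$ the same unwinding produces a homogeneous part $\sum_{j=0}^{i-1}f_{i-j}^{(n)}(w)\,t_n^j$ together with a telescoping contribution from the $t_n\,e_{i-1}(t_1,\ldots,t_{n-1})$ terms, which after reindexing becomes $\sum_{j=1}^{i}e_{i-j}(t_1,\ldots,t_{n-1})\,t_n^j$, matching the second formula.

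This is essentially bookkeeping; there is no conceptual obstacle. The only step requiring care is the $w(q)=-n$ case, where the sign convention $t_{-n}=-t_n$ introduces the $e_{i-1}(t_1,\ldots,t_{n-1})$ correction term that, upon iteration, builds up the second sum in the statement.
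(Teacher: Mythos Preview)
Your proof is correct and follows essentially the same route as the paper: both derive the one-step recurrence $f_i^{(n)}(w)-f_i^{(n-1)}(w')=\pm t_n\,f_{i-1}^{(n-1)}(w')$ (with the extra $-t_n\,e_{i-1}(t_1,\ldots,t_{n-1})$ term when $w(q)=-n$) from the elementary-symmetric splitting, and then iterate down to $f_0=0$. The paper phrases the iteration as ``using the above identity repeatedly'' rather than as an explicit induction on $i$, but the argument is the same.
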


\begin{proof}
We have
\begin{equation}
e_i(\t_1,{\scriptstyle \cdots},\t_n)-e_i(\t_1,{\scriptstyle \cdots},\t_{n-1}) = e_{i-1}(\t_1,{\scriptstyle \cdots},\t_{n-1}) \t_n \nonumber 
\end{equation}
and 
\begin{equation}
e_i(\tau_1(\w),{\scriptstyle \cdots},\tau_n(\w))-e_i(\tau_1(\w'),{\scriptstyle \cdots},\tau_{n-1}(\w'))
 = e_{i-1}(\tau_1(\w'),{\scriptstyle \cdots},\tau_{n-1}(\w'))\tau_q(\w). \nonumber 
\end{equation} 
Therefore
\begin{eqnarray}
f_{i}^{(n)}(\w )-f_{i}^{(n-1)}({\w}') 
&=&\frac{1}{2} \Big(e_i(\tau_1({\w}),{\scriptstyle \cdots},\tau_n({\w}))-e_i(\t_1,{\scriptstyle \cdots},\t_n)\Big) \nonumber \\
& \ & \ \ \ -\frac{1}{2} \Big(e_i(\tau_1({\w}'),{\scriptstyle \cdots},\tau_{n-1}({\w}'))-e_i(\t_1,{\scriptstyle \cdots},\t_{n-1})\Big) \nonumber \\
&=&\frac{1}{2} \Big(e_{i-1}(\tau_1({\w}'),{\scriptstyle \cdots},\tau_{n-1}({\w}'))\tau_q(\w)- e_{i-1}(\t_1,{\scriptstyle \cdots},\t_{n-1})\t_n \Big) \nonumber \\
&=& \Bigg\{
\begin{array}{ll}
 f_{i-1}^{(n-1)}(\w') \t_n & \text{if $\w(q) =n$,}\nonumber \\
-\big(f_{i-1}^{(n-1)}(\w') +e_{i-1}(\t_1,{\scriptstyle \cdots},\t_{n-1})\big) \t_n & \text{if $\w(q) = -n$}. \nonumber \\
\end{array}
\end{eqnarray}
Using the above identity repeatedly, we obtain the following for $w$ with $w(q)=n$:
\begin{eqnarray*}
f_{i}^{(n-1)}({\w}') &=& f_{i}^{(n)}(\w )- f_{i-1}^{(n-1)}(\w')\t_n \\
  &=& f_{i}^{(n)}(\w )-\big(f_{i-1}^{(n)}(\w) -f_{i-2}^{(n-1)}(\w')\t_n \big)\t_n  \\
  &=& f_{i}^{(n)}(\w )-f_{i-1}^{(n)}(\w)\t_n  +\big(f_{i-2}^{(n)}(\w)- f_{i-3}^{(n-1)}(\w')\big)\t_n^2  \\
  &  &  \  \  \  \  \  \  \  \  \vdots \\
  &=& \sum_{j=0}^{i-1}f_{i-j}^{(n)}(\w)(-\t_n)^j.
\end{eqnarray*}

The case $\w(q)=-n$ can be treated in the same way.
\end{proof}

\begin{lemm} \label{surjB}
$\mHT^{*}(\mB_n)$ is generated by $\tau_1,{\scriptstyle\cdots},\tau_n,\t_1,{\scriptstyle\cdots},\t_n,f_1,{\scriptstyle \cdots},f_n$ as a ring. 
\end{lemm}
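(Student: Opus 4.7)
The plan is to induct on $n$, closely paralleling Lemmas~\ref{surj} and~\ref{surjC} but introducing the $f_i$'s to resolve an integrality obstruction peculiar to type~$B$. The base case $n=1$ is direct: $\mB_1$ has two vertices $\pm 1$ joined by an edge of label $t_1$, and since $f_1$ takes values $0$ and $-t_1$ at $1$ and $-1$, every $h \in \mHT^*(\mB_1)$ is of the form $h(1) - g(t_1)\, f_1$ for a unique $g \in \Z[t_1]$. For the inductive step we partition $\tilde S_n = \bigsqcup_{i=1}^n(V_i^+\sqcup V_i^-)$ with $V_i^\pm := \{w : w(i) = \pm n\}$, note that each full labeled subgraph $\mL_i^\pm$ is isomorphic to $\mB_{n-1}$, and reduce a given $h\in\mHT^*(\mB_n)$ to zero in two phases: first kill $h$ on $V_q^+$ for $q = 1,\dots,n$ in order, then on $V_q^-$.

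Phase~1 is a verbatim transcription of the type-$A$ argument. Once $h$ vanishes on $V_i^+$ for $i<q$, the transposition edges from $V_q^+$ to the already-killed $V_i^+$ force $h(w) = \prod_{i<q}(t_{w(i)} - t_n)\, g^q(w)$ on $V_q^+$. Expanding $g^q(w) = \sum_r g^q_r(w)\, t_n^r$ and invoking the internal edges of $\mL_q^+$ (transpositions, sign-swaps, and sign-changes at positions $\neq q$, whose labels divide the prefix difference after a short mod computation) shows each $g^q_r|_{\mL_q^+}$ lies in $\mHT^*(\mB_{n-1})$. The induction hypothesis together with Lemma~\ref{fn}, which rewrites each $f_i^{(n-1)}(w')$ as a $\Z[t_n]$-linear combination of $f_j^{(n)}(w)$, then lifts $g^q_r$ to an integer polynomial $G^q$ in $\tau_i, t_i, f_i$ on $\mB_n$; subtracting $G^q\prod_{i<q}(\tau_i - t_n)$ from $h$ kills $V_q^+$ without disturbing the earlier $V_i^+$.

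Phase~2 is where the $f_i$'s become essential. After all $V_i^+$ are cleared, the surviving edges at $w \in V_q^-$ (transpositions to the killed $V_i^-$ with $i<q$, sign-swaps to the killed $V_i^+$ for $i\neq q$, and the sign-change at $q$ landing in the killed $V_q^+$) force
\[
h(w) = t_n \prod_{i \neq q}(t_{w(i)} - t_n)\prod_{i<q}(t_{w(i)}+t_n)\, g^q_-(w),
\]
where the lone factor $t_n$ (not $2t_n$) comes from the sign-change edge, whose label in $\mB_n$ is only $t_n$. The naive lift $\prod_{k=1}^n(\tau_k - t_n)$ evaluates to $-2t_n\prod_{k\neq q}(t_{w(k)} - t_n)$ on $V_q^-$, introducing a spurious factor of $2$ --- this is precisely the obstruction that breaks the type-$C$ argument over $\Z$, and it is the main obstacle here. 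The remedy is the identity
\[
\prod_{k=1}^n(\tau_k - t_n) = 2\sum_{i=1}^n (-t_n)^{n-i}\, f_i,
\]
which follows immediately from $e_i(\tau) = e_i(t) + 2f_i$ and $\prod_k(t_k - t_n) = 0$. Hence $F := \sum_{i=1}^n(-t_n)^{n-i}f_i$ is a bona fide integer polynomial in the $f_i$'s and $t_n$; it vanishes on every $V_i^+$ and evaluates to $-t_n\prod_{k\neq q}(t_{w(k)} - t_n)$ on $V_q^-$. Subtracting an integer-polynomial multiple of $F\cdot\prod_{l<q}(\tau_l + t_n)$, with coefficient lifted from $g^q_-\in\mHT^*(\mL_q^-)\cong\mHT^*(\mB_{n-1})$ by the same internal-edge analysis and Lemma~\ref{fn} as in Phase~1, kills $V_q^-$ while preserving all earlier vanishing. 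Iterating over $q$ finishes the induction, and the rest is routine bookkeeping inherited from the type-$A$ argument.
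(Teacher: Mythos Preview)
Your proposal is correct and follows essentially the same approach as the paper's proof: the two-phase induction over the strata $V_i^+$ then $V_i^-$, with the key identity $\frac{1}{2}\prod_{k=1}^n(\tau_k-t_n)=\sum_{k=1}^n(-1)^{n-k}f_k\,t_n^{n-k}$ used to absorb the spurious factor of $2$ on the minus strata, is exactly equation~(\ref{elementB}) in the paper. Your treatment is in fact slightly more explicit than the paper's on two points---the extra $t_n$ divisor on $V_q^-$ coming from the sign-change edge at position $q$, and the role of Lemma~\ref{fn} in lifting $f_i^{(n-1)}$ to integer polynomials in the $f_j^{(n)}$---both of which the paper leaves implicit.
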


\begin{proof}
We use induction on $n$ as before. When $n=1$, $\mB_1$ has only one edge with vertices $1$ and $-1$, and the label of the edge is $\t_1$.  Since $\tau_1(\pm 1)=\pm \t_1$, it is easy to check that the lemma holds when $n=1$.  

As before, we consider $V_i^\pm:=\{\w\in \tilde S_n \mid \w(i)=\pm n \}$ and the {full labeled} subgraph $\mL_i^\pm$ of $\mB_n$ with $V_i^\pm$ as the vertex set, where $\mL_i^+$ and $\mL_i^-$  are both isomorphic to $\mB_{n-1}$ for each $i=1,\dots,n$. 
If $h\in \mHT^*(\mB_n)$ vanishes on $V_i^+$ for $i<q$, then one can modify $h$ so that it vanishes on $V_i^+$ for $i<q+1$ by subtracting from $h$ an integer coefficient polynomial of the form $G_+^q\prod_{k=1}^{q-1}(\tau_k-\t_n)$ in $\tau_i$'s, $\t_i$'s and $f_i$'s.  In fact, we obtain $G_+^q$ as an element of $\Map(\tilde S_n,H^*(BT))$ whose restriction to $\mL_q^+$ belongs to $\mHT^*(\mL_q^+)$.  Since $\mL_q^+$ is isomorphic to $\mB_{n-1}$ and $\mHT^*(\mB_{n-1})$ is generated by $\tau_i$'s, $\t_i$'s and $f_i$'s by the induction assumption, we can take $G_+^q$ as a polynomial in $\tau_i$'s, $\t_i$'s and $f_i$'s {with integer coefficients}, where we use  Lemma~\ref{fn}.  

If $h$ vanishes on all $V_i^+$  and $V_j^-$ for $j<q$ with some $q\geq 1$, then one can also modify $h$ so that it vanishes on all $V_i^+$ and $V_j^-$ for $j<q+1$ by subtracting from $h$ some polynomial in $\tau_i$'s, $\t_i$'s and $f_i$'s {with integer coefficients}.  However, this polynomial is not of the form $G_-^q \prod_{k=1}^{n}(\tau_k-\t_n) \prod_{l=1}^{q-1}(\tau_l+\t_n)$ because $\prod_{k=1}^{n}(\tau_k-\t_n)(\w)$ is divisible by $2$ for $\w\in V_i^{-}$. 
Instead of $\prod_{k=1}^{n}(\tau_k-\t_n)$, we use the following element
\begin{eqnarray} \label{elementB}
{\frac{1}{2}}\prod_{k=1}^{n}(\tau_k-\t_n)&=&{\frac{1}{2}}\sum_{k=0}^{n}(-1)^{n-k}e_k(\tau){\t_n}^{n-k} \nonumber\\ 
 &=&{\frac{1}{2}}\sum_{k=0}^{n}(-1)^{n-k}(2f_k+e_k(\t)){\t_n}^{n-k}\\ 
&=&\sum_{k=1}^{n}(-1)^{n-k}f_k {\t_n}^{n-k}, \nonumber
\end{eqnarray}
so that the polynomial which we subtract is of the form 
$$G_-^q \left(\Sigma_{k=1}^{n}(-1)^{n-k}f_k {\t_n}^{n-k}\right) \prod_{l=1}^{q-1}(\tau_l+\t_n)$$
where $G_-^q$ is a polynomial in $\tau_i$'s, $\t_i$'s and $f_i$'s {with integer coefficients}. Thus we finally reach an element which vanishes on all $V_i^\pm$ by subtracting polynomials in $\tau_i$'s, $\t_i$'s and $f_i$'s {with integer coefficients} from $h$, and this proves the lemma. 
\end{proof}  

\begin{lemm} \label{lemm:relation}
$\sum_{i=1}^{2k}(-1)^if_i(f_{2k-i}+e_{2k-i}(\t))=0$ for $k=1,{\scriptstyle\cdots},n$.
\end{lemm}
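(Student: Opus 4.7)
The plan is to avoid manipulating the sum directly and instead encode it as the coefficient of $z^{2k}$ in a generating-function identity in $\mHT^*(\mB_n)[z]$, then deduce the whole family of identities from a single parity argument. Concretely, introduce
\[
P(z)=\prod_{i=1}^n(1-\tau_i z),\quad Q(z)=\prod_{i=1}^n(1-\t_i z),\quad F(z)=\sum_{i\ge 0}(-1)^i f_i z^i,
\]
so that the defining relation $2f_i=e_i(\tau)-e_i(\t)$ translates into the clean identity $P(z)=2F(z)+Q(z)$.

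The substantive step is then the quadratic identity
\[
P(z)\,P(-z)=Q(z)\,Q(-z)\qquad\text{in }\mHT^*(\mB_n)[z].
\]
Since $\mHT^*(\mB_n)$ embeds into $\Map(\tilde S_n,H^*(BT))$, I would verify this vertex by vertex: at $\w\in\tilde S_n$ we have $P(z)(\w)P(-z)(\w)=\prod_i(1-\t_{\w(i)}^2 z^2)$, and by the convention $\t_{-m}=-\t_m$ each factor depends only on $|\w(i)|$, so this product equals $\prod_i(1-\t_i^2 z^2)=Q(z)(\w)Q(-z)(\w)$.

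Substituting $P=2F+Q$ into the identity and cancelling $Q(z)Q(-z)$ gives
\[
2F(z)F(-z)+F(z)Q(-z)+F(-z)Q(z)=0,
\]
which is exactly the statement that $A(z):=F(z)\bigl(F(-z)+Q(-z)\bigr)$ satisfies $A(z)+A(-z)=0$, i.e.\ $A(z)$ is an odd polynomial in $z$. Hence every even-degree coefficient of $A(z)$ vanishes. Using $F(-z)=\sum_j f_j z^j$ and $Q(-z)=\sum_j e_j(\t) z^j$, the coefficient of $z^{2k}$ in $A(z)$ is
\[
\sum_{i=0}^{2k}(-1)^i f_i\bigl(f_{2k-i}+e_{2k-i}(\t)\bigr),
\]
and the $i=0$ term drops out because $f_0=0$, yielding precisely the desired identity.

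The only non-formal ingredient is recognizing that the sign-invariance of $\prod_i(1-\t_i^2 z^2)$ under the $\tilde S_n$-action is the source of a quadratic relation between $P$ and $Q$; once that is packaged as $P(z)P(-z)=Q(z)Q(-z)$, extracting the family of identities is purely formal.
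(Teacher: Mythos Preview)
Your proof is correct and follows essentially the same approach as the paper: both use the generating-function identity $P(z)P(-z)=Q(z)Q(-z)$ (the paper writes it as $\prod(1-\tau_i^2x^2)=\prod(1-t_i^2x^2)$), substitute $P=2F+Q$, cancel $Q(z)Q(-z)$, and extract the coefficient of $x^{2k}$. Your repackaging of the last step as ``$A(z):=F(z)(F(-z)+Q(-z))$ is an odd polynomial'' is a slightly cleaner way to organize the parity bookkeeping that the paper does by hand, but the substance is identical.
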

\begin{proof}
Cleaely we have $e_i(\tau^2)=e_i(\t^2)$ for $i=1,2,\dots,n$, namely
\begin{equation}\label{et2}
\prod_{i=1}^{n}(1-{\tau_i}^2 x^2)=\prod_{i=1}^{n}(1-{\t_i}^2 x^2).
\end{equation}
Therefore
\begin{eqnarray}
0&=&\prod_{i=1}^{n}(1-{\tau_i}^2 x^2)-\prod_{i=1}^{n}(1-{\t_i}^2 x^2)\nonumber\\
 &=&\Big(\sum_{i=0}^{n}(-1)^ie_i(\tau)x^i\Big)\Big(\sum_{j=0}^{n}e_j(\tau)x^j\Big)-\Big(\sum_{i=0}^{n}(-1)^ie_i(\t)x^i\Big)\Big(\sum_{j=0}^{n}e_j(\t)x^j\Big)\nonumber \\
 &=&\Big(\sum_{i=0}^{n}(-1)^i(2f_i+e_i(\t))x^i\Big)\Big(\sum_{j=0}^{n}(2f_j+e_j(\t))x^j\Big)-\Big(\sum_{i=0}^{n}(-1)^ie_i(\t)x^i\Big)\Big(\sum_{j=0}^{n}e_j(\t)x^j\Big)\nonumber \\
 &=&4\sum_{i,j=1}^{n}(-1)^if_i f_jx^{i+j}+2\sum_{i,j=0}^{n}(-1)^i\big(f_i e_j(\t)+f_j e_i(\t)\big)x^{i+j}\ \nonumber \\
 &=&4\sum_{k=1}^{n}{\sum_{i=1}^{2k}(-1)^if_i f_{2k-i} x^{2k}}+4\sum_{k=1}^{n}{\sum_{i=1}^{2k}(-1)^if_ie_{2k-i}(\t)x^{2k}} \nonumber
\end{eqnarray}
where we used $f_0=0$. This implies the lemma because the coefficient of $x^{2k}$ must vanish.  
\end{proof}

We abbreviate the polynomial ring $\mathbb{Z}[\tau_1,{\scriptstyle \cdots},\tau_n,\t_1,{\scriptstyle \cdots},\t_n,f_1,{\scriptstyle \cdots},f_n]$ as $\mathbb{Z}[\tau,\t,f]$.  
Since $2f_i=e_i(\tau)-e_i(\t)$ by definition, it follows from Lemma~\ref{lemm:relation} that
the canonical map $\mathbb{Z}[\tau,\t,f] \rightarrow \mHT^*(\mB_n)$ induces a grade preserving map 
\begin{equation}\label{canoB}
\mathbb{Z}[\tau,\t,f]/I \rightarrow \mHT^*(\mB_n),
\end{equation}
where 
$I$ is the ideal in Theorem~\ref{Btype}, 
and it is an epimorphism by Lemma~\ref{surjB}. Since $\mHT^*(\mB_n)$ is a submodule of a direct sum of some $\mathbb{Z}[\t]$'s, $\mHT^*(\mB_n)$ is free over $\mathbb{Z}$. In addition, its Hilbert series is given by ${\frac{1}{(1-s^2)^{2n}}} \prod_{i=1}^n(1-s^{4i})$. This can be shown by a similar computation to the proof of Lemma~\ref{inje}. In order to prove that the epimorphism \eqref{canoB} is actually an isomorphism, it suffices to verify the following Lemmas~\ref{tB} and ~\ref{tB2}.

\begin{lemm} \label{tB}
$\mathbb{Z}[\tau,\t,f]/I$ is free over $\mathbb{Z}$. 
\end{lemm}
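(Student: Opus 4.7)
The plan is to exhibit an explicit $\mathbb{Z}[\t]$-module spanning set for $A := \mathbb{Z}[\tau,\t,f]/I$, count its graded size, and use the surjection \eqref{canoB} onto the free $\mathbb{Z}$-module $\mHT^{*}(\mB_n)$ to bootstrap freeness of $A$. First I would run the analog of Lemma~\ref{gene}: the relation $R_i: e_i(\tau) = 2f_i + e_i(\t)$ lets me rerun the proof of that lemma verbatim, with $e_i(\t)$ replaced on the right-hand side of \eqref{et1} by $2f_i + e_i(\t)$; the conclusion is that $\tau_p^{n+1-p}$ rewrites modulo $I$ as a polynomial in $\tau_1,\dots,\tau_{p-1}$, the $\t_i$'s and the $f_i$'s with the exponent of $\tau_p$ at most $n-p$. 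Hence $A$ is generated as a $\mathbb{Z}[\t,f]$-module by $\prod_{p=1}^{n-1}\tau_p^{a_p}$ with $0 \le a_p \le n-p$.

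Next I would process $f$-monomials using the relations $S_k$. Since the only $f_i^2$ appearing in $S_k$ is the $j = k$ term $(-1)^k f_k^2$, each $S_k = 0$ can be rewritten as
\[
f_k^2 = (-1)^{k+1}\Big(2\sum_{j=1}^{k-1}(-1)^j f_j f_{2k-j} + \sum_{j=1}^{2k}(-1)^j f_j e_{2k-j}(\t)\Big),
\]
whose right-hand side contains only squarefree $f$-products and linear-in-$f$ terms (with the conventions $f_\ell = e_\ell(\t) = 0$ for $\ell > n$). Applying this rewrite for each $k \in \{1,\dots,n\}$ and iterating, every $f$-monomial reduces modulo $I$ to a squarefree product $\prod_{j \in S}f_j$ for some $S \subseteq \{1,\dots,n\}$. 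Combined with the first step, $A$ is spanned over $\mathbb{Z}[\t]$ by the $n!\cdot 2^n$ monomials $\tau^{\alpha}\prod_{j \in S}f_j$ with $a_p \le n-p$ and $S \subseteq \{1,\dots,n\}$.

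A direct count of these monomials gives the Poincar\'e polynomial $\prod_{p=1}^{n-1}\frac{1-s^{2(n-p+1)}}{1-s^2}\cdot\prod_{j=1}^{n}(1+s^{2j}) = \frac{\prod_{i=1}^n(1-s^{4i})}{(1-s^2)^n}$ over $\mathbb{Z}[\t]$; multiplying by $\frac{1}{(1-s^2)^n}$ reproduces the Hilbert series $F(\mHT^{*}(\mB_n),s) = \frac{\prod_{i=1}^n(1-s^{4i})}{(1-s^2)^{2n}}$ stated just before Lemma~\ref{tB}. Thus in each degree $2k$, the finitely generated abelian group $A^{2k}$ is generated by at most $\rank_{\mathbb{Z}}\mHT^{2k}(\mB_n)$ elements, while the surjection \eqref{canoB} onto the free group $\mHT^{2k}(\mB_n)$ yields $\rank_{\mathbb{Z}} A^{2k} \ge \rank_{\mathbb{Z}}\mHT^{2k}(\mB_n)$; the two quantities therefore coincide. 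A finitely generated abelian group whose minimal number of generators equals its torsion-free rank must be free of that rank, so each $A^{2k}$ is free over $\mathbb{Z}$, whence so is $A$.

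The main technical obstacle is termination and well-definedness of the $f$-monomial rewriting: although each application of the rewrite replaces some $f_k^2$ by squarefree and lower-$f$-degree terms, the squarefree summands $f_j f_{2k-j}$ can combine with the rest of the monomial to produce fresh $f_i^2$'s at other indices. One must therefore supply an explicit monomial order (for example, lexicographic descent on the exponent vector $(e_n, e_{n-1},\dots,e_1)$) under which every rewrite strictly decreases, verify that the reductions terminate and interleave compatibly with the type-A-style reductions of the first step, and ultimately confirm that exactly $n!\cdot 2^n$ normal-form monomials survive. This combinatorial bookkeeping is the real content of the proof.
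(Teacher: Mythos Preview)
Your overall strategy is sound and in fact more streamlined than the paper's: you bound the number of $\mathbb{Z}$-generators of $A^{2k}$ from above by the known rank of $\mHT^{2k}(\mB_n)$ and then invoke the surjection \eqref{canoB} to squeeze out freeness in one stroke. The paper instead splits the problem, killing odd torsion via Lemma~\ref{1/2} (the $\mathbb{Z}[\tfrac12]$ comparison) and then ruling out $2$-torsion by a Hilbert-series comparison over $\mathbb{Z}/2$. The payoff of the paper's detour is that over $\mathbb{Z}/2$ the coefficient $2$ in
\[
f_k^{\,2}=(-1)^{k+1}\Big(2\sum_{j=1}^{k-1}(-1)^{j}f_jf_{2k-j}+\sum_{j=1}^{2k}(-1)^{j}f_j\,e_{2k-j}(t)\Big)
\]
kills the quadratic terms entirely, so $f_k^{\,2}$ becomes \emph{linear} in the $f$'s and the reduction to $j_k\le 1$ is immediate---no termination argument is needed. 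Your integral approach must instead confront that rewriting head-on.

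That is exactly where your proposal has a genuine gap: the monomial order you offer does not decrease under the rewrite. Replacing $f_k^{\,2}$ by $f_jf_{2k-j}$ (with $j<k<2k-j$) raises the exponent at the larger index $2k-j$, so in the lexicographic order on $(e_n,e_{n-1},\dots,e_1)$ the new term is \emph{larger}, not smaller. A workable fix is to order first by total $f$-degree and then, within a fixed $f$-degree, by the quantity $\sum_i e_i\,i^{2}$; since
\[
j^{2}+(2k-j)^{2}-2k^{2}=2(k-j)^{2}>0,
\]
each same-$f$-degree rewrite strictly increases $\sum_i e_i\,i^{2}$, which is bounded above by $n\cdot\sum_i e_i\,i$, while the remaining terms drop $f$-degree by one. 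With this order the process terminates, and the rest of your argument (the Poincar\'e count and the rank squeeze) goes through. Alternatively, you could simply adopt the paper's shortcut and run the $f$-reduction only modulo $2$, which makes the termination issue evaporate; combined with your surjection-plus-rank step this still yields the conclusion without invoking Lemma~\ref{1/2} separately.
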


\begin{proof}
By Lemma~\ref{1/2} $\mathbb{Z}[\tau,\t,f]/I \otimes \mathbb{Z}[\frac{1}{2}]=\mathbb{Z}[\tau,\t]/I \otimes \mathbb{Z}[\frac{1}{2}]$ is isomorphic to $\mHT^*(\mB_n)\otimes \Z[\frac{1}{2}]$.  Since $\mHT^*(\mB_n)$ is free over $\Z$, this means that  $\mathbb{Z}[\tau,\t,f]/I$ has no odd torsion and hence it suffices to show that $\mathbb{Z}[\tau,\t,f]/I$ has no 2-torsion. If $\mathbb{Z}[\tau,\t,f]/I$ has 2-torsion, then 
$$F(\mathbb{Z}[\tau,\t,f]/I\otimes \mathbb{Z}/2,s)> F(\mHT^*(\mB_n)\otimes \mathbb{Z}/2,s);$$ 
so we will prove that
\begin{equation} \label{ineq} 
F(\mathbb{Z}[\tau,\t,f]/I \otimes \mathbb{Z}/2,s)\leq F(\mHT^*(\mB_n)\otimes \mathbb{Z}/2,s). 
\end{equation}

\begin{claim}
$\mathbb{Z}[\tau,\t,f]/I \otimes \mathbb{Z}/2$ is generated by 
 elements $\prod_{k=1}^{n}\tau_k^{i_k} \prod_{k=1}^{n} f_k^{j_k} $, with $i_k \leq n-k$ and $j_k \leq 1$, over $\mathbb{Z}/2[\t]$. 
\end{claim}

We admit the claim for the moment and complete the proof of the lemma.  If the elements $\prod_{k=1}^{n}\tau_{k}^{i_k}\prod_{k=1}^{n}f_{k}^{j_k}$ are linearly independent over $\Z/2[t]$, then the Hilbert series of $\mathbb{Z}[\tau,\t,f]/I \otimes \mathbb{Z}/2$ (over the field $\Z/2$) is given by $${\frac{1}{(1-s^2)^n}}\sum_{0\leq i_k \leq n-k} \sum_{0 \leq j_k \leq 1}s^{2(\sum_{k=1}^{n}i_k + \sum_{k=1}^{n}kj_k)} ,$$ so we have  
\begin{eqnarray}\label{moutyotto}
F(\mathbb{Z}[\tau,\t,f]/I \otimes \mathbb{Z}/2,s) &\leq & {\frac{1}{(1-s^2)^n}}\sum_{0\leq i_k \leq n-k} \sum_{0 \leq j_k \leq 1}s^{2(\sum_{k=1}^{n}i_k + \sum_{k=1}^{n}kj_k)}  \nonumber \\
&=& {\frac{1}{(1-s^2)^n}} \Big( \sum_{0\leq i_k \leq n-k} \prod _{k=1}^n s^{2i_k} \Big) \Big( \sum_{0 \leq j_k \leq 1} \prod_{k=1}^{n}s^{2kj_k} \Big)  \nonumber \\
&=&{\frac{1}{(1-s^2)^{2n}}}(1-s^2)^n\prod_{i=1}^{n-1}(1+\sum_{j=1}^{i}s^{2j})\prod_{i=1}^{n}(1+s^{2i})  \\
 &=&{\frac{1}{(1-s^2)^{2n}}}\prod_{i=1}^{n}(1-s^{2i})\prod_{i=1}^{n}(1+s^{2i}) \nonumber \\
 &=&{\frac{1}{(1-s^2)^{2n}}}\prod_{i=1}^{n}(1-s^{4i}) \nonumber \\
 &=&F(\mHT^*(\mB_n)\otimes \mathbb{Z}/2,s). \nonumber 
\end{eqnarray}
This proves the desired inequality \eqref{ineq}.

In the sequel it remains to show the claim above and for that it suffices to verify the following (I) and (I\hspace{-0.5mm}I):

\noindent
(I) Elements $\prod_{k=1}^{n}\tau_k^{i_k} \prod_{k=1}^{n} f_k^{j_k} $, with $i_k \leq n-k$, generate $\mathbb{Z}[\tau,\t,f]/I $ as a $\mathbb{Z}[\t]$-module, in particular, they generate $\Z/2[\tau,\t,f]/I$ as a $\Z/2[t]$-module.\\
(I\hspace{-0.5mm}I) Elements $f_1^{{j_1}'}\cdots f_n^{{j_n}'}$ can be written as a linear combination of $f_1^{{j_1}}\cdots f_n^{{j_n}}$ with $j_k \leq 1$ over $\mathbb{Z}/2[\t]$.

\medskip
\noindent
Proof of (I). Clearly the elements $\prod_{k=1}^{n}\tau_k^{i_k} \prod_{k=1}^{n} f_k^{j_k} $, with no restriction on exponents, generate $\mathbb{Z}[\tau,\t,f]/I$ as a $\mathbb{Z}[\t]$-module.
We have an identity 
\begin{eqnarray}\label{ttt}
\prod_{i=1}^{p}{\frac{1}{1-\tau_ix}} &=& \prod_{i=p+1}^{n}(1-\tau_ix)\prod_{i=1}^{n}(1+\tau_ix)\prod_{i=1}^{n}{\frac{1}{1-{\t_i}^2x^2}} \nonumber  \\
 &=& \Big(\sum_{i=0}^{n-p}(-1)^ie_i(\tau_{p+1},{\scriptstyle \cdots},\tau_n)x^i\Big)\Big(\sum_{j=0}^{n}e_j(\tau_1,{\scriptstyle \cdots},\tau_n)x^j\Big)\sum_{k=0}^\infty h_k(t^2)x^{2k} \\
 &=& \Big(\sum_{i=0}^{n-p}(-1)^ie_i(\tau_{p+1},{\scriptstyle \cdots},\tau_n)x^i\Big)\Big(\sum_{j=0}^{n}(2f_j+e_j(\t))x^j\Big)\sum_{k=0}^\infty h_k(t^2)x^{2k}  \nonumber
\end{eqnarray}
where the first equality in \eqref{ttt} follows from \eqref{et2}.

Comparing coefficients of $x^{n+1-p}$ in \eqref{ttt}, we have
\begin{equation} \label{hijk}
h_{n+1-p}(\tau_1,{\scriptstyle \cdots},\tau_p)=\sum_{i+j+2k=n+1-p,\ j+k>0}(-1)^ie_i(\tau_{p+1},{\scriptstyle \cdots},\tau_n)(2f_j+e_j(\t))h_k(\t^2).
\end{equation}

On the other hand, we have
\begin{equation*}
\sum_{j=0}^{i}e_j(\tau_1,{\scriptstyle \cdots},\tau_p)e_{i-j}(\tau_{p+1},{\scriptstyle \cdots},\tau_n)=e_i(\tau)=2f_i+e_i(\t) \text{$\ \ \ \ $for any $i$}, 
\end{equation*}
that is,
\begin{equation} \label{2fiei}
e_{i}(\tau_{p+1},{\scriptstyle \cdots},\tau_n)=2f_i+e_i(\t)-\sum_{j=1}^{i}e_j(\tau_1,{\scriptstyle \cdots},\tau_p)e_{i-j}(\tau_{p+1},{\scriptstyle \cdots},\tau_n)
\text{$\ \ \ \ $for any $i$}.
\end{equation}
Then the same argument as in the latter part of the proof of Lemma~\ref{gene} using \eqref{2fiei} shows that $e_i(\tau_{p+1},{\scriptstyle \cdots},\tau_n)$ can be written as a linear combination of $\prod_{k=1}^{p}\tau_k^{i_k}\prod_{k=1}^{n} f_k^{j_k}$, with $i_k \leq i$, over $\mathbb{Z}[\t]$.
This fact and \eqref{hijk} together with \eqref{comp} show that $\tau_p^{n+1-p}$ is a polynomial in $\tau_1,{\scriptstyle \cdots},\tau_p$, $\t_i$'s and $f_i$'s with the exponent of $\tau_p$ less than or equal to $n-p$. Therefore the elements $\prod_{k=1}^{n}\tau_k^{i_k}\prod_{k=1}^{n} f_k^{j_k}$ with $i_k\leq n-k $, generate $\mathbb{Z}[\tau,\t,f]/I$ as a $\mathbb{Z}[\t]$-module.

\medskip
\noindent
Proof of (I\hspace{-0.5mm}I).  
It follows from Lemma~\ref{lemm:relation} that
$$f_k^2=(-1)^{k+1}\Big(2\sum_{i=1}^{k-1}(-1)^if_if_{2k-i}+\sum_{i=1}^{2k}(-1)^if_ie_{2k-i}(\t) \Big)\quad \text{for $k=1,\dots,n $.}$$ 
In $\mathbb{Z}[\tau,\t,f]/I \otimes \mathbb{Z}/2$, we can disregard $2\sum_{i=1}^{k-1}f_if_{2k-1}$; so ${f_k}^2$ can be written as a linear combination of $f_i$'s over $\mathbb{Z}/2[\t]$. This proves (I\hspace{-0.5mm}I)  and completes the proof of the claim.
\end{proof} 

\begin{lemm}\label{tB2}
$\displaystyle{F(\mathbb{Z}[\tau, \t, f]/I,s)={\frac{1}{(1-s^2)^{2n}}} \prod_{i=1}^n(1-s^{4i})}$.
\end{lemm}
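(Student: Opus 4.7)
The plan is to sandwich the Hilbert series of $\mathbb{Z}[\tau,\t,f]/I$ between two copies of the target expression $\frac{1}{(1-s^2)^{2n}} \prod_{i=1}^n(1-s^{4i})$ by combining the surjection \eqref{canoB}, the freeness statement of Lemma~\ref{tB}, and an inequality already extracted inside the proof of Lemma~\ref{tB}.

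First, I would record the lower bound. The map \eqref{canoB} is a grade-preserving epimorphism onto $\mHT^*(\mB_n)$, whose Hilbert series has already been identified (by the argument paralleling Lemma~\ref{inje}) as $\frac{1}{(1-s^2)^{2n}} \prod_{i=1}^n(1-s^{4i})$. Hence, comparing graded ranks piece by piece,
\[
F(\mathbb{Z}[\tau,\t,f]/I,s) \geq F(\mHT^*(\mB_n),s) = \frac{1}{(1-s^2)^{2n}} \prod_{i=1}^n(1-s^{4i}).
\]

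Next, I would establish the upper bound using Lemma~\ref{tB}. Since $\mathbb{Z}[\tau,\t,f]/I$ is free over $\Z$ by that lemma, the $\Z$-rank of each graded piece equals the $\Z/2$-dimension of that piece after tensoring with $\Z/2$, so
\[
F(\mathbb{Z}[\tau,\t,f]/I,s) = F\bigl((\mathbb{Z}[\tau,\t,f]/I)\otimes \Z/2,s\bigr).
\]
On the other hand, the computation \eqref{moutyotto} carried out inside the proof of Lemma~\ref{tB} already shows
\[
F\bigl((\mathbb{Z}[\tau,\t,f]/I)\otimes \Z/2,s\bigr) \leq \frac{1}{(1-s^2)^{2n}} \prod_{i=1}^n(1-s^{4i}).
\]

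Combining the two bounds forces equality throughout, which yields the desired identity. So there is essentially no new obstacle; the statement is a direct corollary of what was proved for Lemma~\ref{tB} together with the surjectivity of \eqref{canoB} and the known Hilbert series of $\mHT^*(\mB_n)$. The only subtlety worth flagging is that one must use the intermediate estimate \eqref{moutyotto} from the proof of Lemma~\ref{tB}, not merely the freeness conclusion of that lemma, since otherwise one would only obtain $F(\mathbb{Z}[\tau,\t,f]/I,s) = F(\mathbb{Z}[\tau,\t,f]/I \otimes \Z/2,s)$ without any control on its size.
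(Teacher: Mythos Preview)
Your proposal is correct and follows essentially the same route as the paper's own proof: both arguments sandwich $F(\mathbb{Z}[\tau,\t,f]/I,s)$ between the known Hilbert series of $\mHT^*(\mB_n)$ (via the surjection \eqref{canoB}) and the upper bound coming from \eqref{moutyotto} combined with freeness (Lemma~\ref{tB}). Your closing remark about needing the intermediate estimate \eqref{moutyotto} and not just the freeness conclusion is exactly the point.
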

\begin{proof}
The epimorphism \eqref{canoB} means
\begin{equation}\label{aaatukareta}
F(\mHT^*(\mB_n),s) \leq  F(\mathbb{Z}[\tau,\t,f]/I,s).
\end{equation}
In addition, since $\mathbb{Z}[\tau,\t,f]/I$ and $\mHT^*(\mB_n)$ are free over $\mathbb{Z}$, 
\begin{equation}\label{tukareta2}
F(\mHT^*(\mB_n)\otimes \mathbb{Z}/2,s)=F(\mHT^*(\mB_n),s) 
\end{equation}and
\begin{equation}\label{tukareta3}  
F(\mathbb{Z}[\tau,\t,f]/I \otimes \mathbb{Z}/2,s)=F(\mathbb{Z}[\tau,\t,f]/I,s).
\end{equation}
It follows from \eqref{moutyotto}, \eqref{aaatukareta}, \eqref{tukareta2} and \eqref{tukareta3} that 
$$F(\mathbb{Z}[\tau,\t,f]/I,s) = F(\mHT^*(\mB_n),s) = {\frac{1}{(1-s^2)^{2n}}} \prod_{i=1}^n(1-s^{4i}), $$
proving the lemma.
\end{proof}
Thus the proof of Theorem~\ref{Btype} has been completed.

\section{Type $D_n$}  \label{sectD}

In this section we will treat type $D_n$.  
The root system $\Phi(D_n)$ of type $D_n$ is given by 
\begin{equation*} \label{rootDn}
\Phi(D_{n})=\{\pm(\t_i + \t_j), \ \pm(\t_i - \t_j) \mid 1\le i<j\le n \}
\end{equation*}
and its Weyl group is the index two subgroup $\tilde S^+_n$ of $\tilde S_n$ defined by 
\[
\tilde S^+_n:=\{\w\in \tilde S_n\mid \text{the number of $i\in [n]$ with $\w(i)<0$ is even}\}.
\]

\begin{theo} \label{Dtype}
Let $\mD_n$ be the labeled graph associated with the root system $\Phi(D_{n})$ of type $D_n$ above. Then 
\begin{equation} \label{coDn}
\mHT^{*}(\mD_n) 
=
\mathbb{Z}[\tau_1,{\scriptstyle\cdots},\tau_n,\t_1,{\scriptstyle\cdots},\t_n,f_1,{\scriptstyle\cdots},f_{n-1}]/I
,
\end{equation}
where $I$ is the ideal generated by 
\[
\begin{split}
& 2f_i-e_i(\tau)+e_i(\t ) \quad (i=1,{\scriptstyle\cdots},n-1),\\
&\sum_{j=1}^{2k}(-1)^jf_j(f_{2k-j}+e_{2k-j}(\t)) \quad (k=1,{\scriptstyle\cdots},n), \\
&e_n(\tau)-e_n(\t),
\end{split}
\]
where $f_\ell=0$ for $\ell \ge n$ and $e_\ell(t)=0$ for $\ell >n$.  

\end{theo}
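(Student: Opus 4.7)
The plan is to mirror Section~\ref{sectB}, accounting for three features specific to $D_n$: the Weyl group is the index-two subgroup $\tilde S_n^+$; the roots $\pm t_k$ are absent; and consequently $e_n(\tau) = e_n(\t)$ holds identically (not just modulo $2$), so $f_n$ already vanishes in $\mHT^*(\mD_n)$ and must be dropped from the list of generators.

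First I verify the defining relations of $I$ hold in $\mHT^*(\mD_n)$. The identities $2f_i = e_i(\tau) - e_i(\t)$ for $i \le n-1$ are the definition of $f_i$. The quadratic relations follow from $e_k(\tau^2) = e_k(\t^2)$ by the computation in Lemma~\ref{lemm:relation}, with $f_\ell = 0$ for $\ell \ge n$. The new relation $e_n(\tau) = e_n(\t)$ holds because for $w \in \tilde S_n^+$ the product $\tau_1(w)\cdots \tau_n(w) = t_{w(1)}\cdots t_{w(n)}$ involves an even number of sign changes, so equals $t_1\cdots t_n$. This yields a grade-preserving homomorphism $\Z[\tau,\t,f]/I \to \mHT^*(\mD_n)$, which I show is surjective via the generation lemma: using the partition $\tilde S_n^+ = \bigsqcup_{i,\epsilon} V_i^\epsilon$ with $V_i^\epsilon = \{w \in \tilde S_n^+ : w(i) = \epsilon n\}$, each full labeled subgraph $\mL_i^\epsilon$ is isomorphic to $\mD_{n-1}$ (fixing $w(i) = \pm n$ and forgetting position $i$ gives a signed permutation on $[n-1]$ of the appropriate parity). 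The phase-$1$ and phase-$2$ elimination steps then proceed as in Lemma~\ref{surjB}, using the integer-coefficient identity $\frac{1}{2}\prod_{k=1}^n(\tau_k - t_n) = \sum_{k=1}^{n-1}(-1)^{n-k} f_k t_n^{n-k}$ (cf.\ \eqref{elementB}, noting $f_n = 0$ in $D_n$), together with the verbatim analog of Lemma~\ref{fn}.

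Next I compare Hilbert series. By a recursion on $n$ analogous to Lemma~\ref{inje} and its $B_n$ counterpart, one computes
\[
F(\mHT^*(\mD_n), s) = \frac{(1-s^{2n})\prod_{i=1}^{n-1}(1-s^{4i})}{(1-s^2)^{2n}}.
\]
To upgrade the surjection $\Z[\tau,\t,f]/I \to \mHT^*(\mD_n)$ to an isomorphism, I first establish the $\Z[\frac{1}{2}]$-version analogous to Lemma~\ref{1/2}: after tensoring with $\Z[\frac{1}{2}]$ the $f_i$'s are eliminated and $I$ reduces to the ideal generated by $e_i(\tau^2) - e_i(\t^2)$ for $i < n$ together with $e_n(\tau) - e_n(\t)$, and a direct adaptation of Section~\ref{sectC} to $\tilde S_n^+$ matches this with $\mHT^*(\mD_n)\otimes\Z[\frac{1}{2}]$. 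Freeness over $\Z$ then reduces, as in Lemma~\ref{tB}, to a mod-$2$ spanning estimate: one shows $\Z[\tau,\t,f]/I \otimes \Z/2$ is spanned over $\Z/2[\t]$ by $\prod_{k=1}^{n-1}\tau_k^{i_k}\prod_{k=1}^{n-1}f_k^{j_k}$ with $i_k \le n-k$ and $j_k \le 1$ ($\tau_n$ being eliminated via $e_n(\tau) = e_n(\t)$), which gives exactly the Hilbert series displayed above.

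The subtlest step is this last mod-$2$ bounding. In $B_n$ the relation $2f_n = e_n(\tau) - e_n(\t)$ retained $f_n$ in the spanning set and produced the factor $(1-s^{4n})$ in the numerator. In $D_n$ the stronger relation $e_n(\tau) = e_n(\t)$ forces $f_n = 0$, eliminates $\tau_n$ from the spanning monomials, and must instead yield the softer factor $(1-s^{2n})$. Checking that the combined interaction of the quadratic $f$-relations and the $e_n$-relation produces exactly this factor---rather than an asymmetric overshoot that would block the isomorphism---is where the bookkeeping requires care.
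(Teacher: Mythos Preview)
Your outline follows the $B_n$ template closely, but it glosses over exactly the point where $D_n$ diverges from $B_n$, and this causes two related gaps in the generation argument.

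First, your claim that each $\mL_i^\epsilon$ is isomorphic to $\mD_{n-1}$ is false for $\epsilon=-$. If $w\in V_i^-$ then $w(i)=-n$ contributes one negative entry, so the remaining word $w(1)\cdots\widehat{w(i)}\cdots w(n)$ has an \emph{odd} number of negatives; thus $\mL_i^-\cong\mD_{n-1}^-$, not $\mD_{n-1}$. Your induction hypothesis, stated only for $\mD_{n-1}$, therefore does not apply to the quotients $g^q$ arising on $\mL_q^-$. The paper's fix is to run the induction for $\mD_n$ and $\mD_n^-$ simultaneously, proving that both rings are generated by the $\tau_i$, $t_i$, $f_i$; then whenever you land in an $\mL_q^-$ you invoke the hypothesis for $\mD_{n-1}^-$.

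Second, the element you propose for the minus phase, $\frac{1}{2}\prod_{k=1}^n(\tau_k-t_n)$, is the $B_n$ element and carries one factor too many here. In $B_n$ a vertex $w\in V_q^-$ is joined to $V_q^+$ by the edge labeled $t_{w(q)}$, which forces $t_n\mid h(w)$; in $D_n$ the roots $\pm t_k$ are absent, there is no edge from $V_q^-$ to $V_q^+$, and $h(w)$ is only guaranteed to be divisible by $\prod_{k\ne q}(t_{w(k)}-t_n)\prod_{l<q}(t_{w(l)}+t_n)$. Your element evaluates on $V_q^-$ to $-t_n$ times this product, so you cannot in general write $h(w)$ as an integer polynomial times it. The correct replacement is
\[
P:=-\frac{1}{2t_n}\prod_{k=1}^n(\tau_k-t_n)=\sum_{k=1}^{n-1}(-1)^{n-1-k}f_k\,t_n^{\,n-1-k},
\]
which is an integer polynomial in the generators precisely because $f_n=0$ in type $D_n$; it vanishes on every $V_i^+$ and takes the exact value $\prod_{k\ne q}(t_{w(k)}-t_n)$ on $V_q^-$. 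With these two corrections (the simultaneous induction and the use of $P$ in place of $\frac{1}{2}\prod(\tau_k-t_n)$), the rest of your plan---the relation check, the $\Z[\tfrac{1}{2}]$ comparison, the mod-$2$ spanning estimate, and the Hilbert-series matching---goes through as you describe.
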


\begin{rema} 
(1) Similarly to $\mD_n$, one can define a labeled graph $\mD_n^-$ with $\tilde S_n\backslash \tilde S_n^+$ as the vertex set on which $\tilde S_n^+$ acts. One sees that $\mHT^*(\mD_n^-)$ agrees with the right hand side of \eqref{coDn} with $e_n(\tau)-e_n(\t)$ replaced by $e_n(\tau)+e_n(\t)$.     

(2) If we set $t_1=\dots=t_n=0$, then the right hand side of the identity in Theorem~\ref{Dtype} reduces to 
\[
\mathbb{Z}[\tau_1,{\scriptstyle\cdots},\tau_n,f_1 ,{\scriptstyle\cdots},f_{n-1}]/J
\end{equation*}
where $J$ is the ideal generated by 
\[
 2f_i-e_i(\tau) \ \ (i=1,{\scriptstyle\cdots},n-1),\quad \sum_{j=1}^{2k-1}(-1)^jf_jf_{2k-j}+f_{2k} \ \ (k=1,{\scriptstyle\cdots},n),\quad e_n(\tau)
\]
where $f_\ell=0$ for $\ell\ge n$, 
and this agrees with the ordinary cohomology ring of the flag manifold of type $D_n$, see \cite[Corollary 2.2]{to-wa74}. 
\end{rema}

\begin{proof}[Outline of proof] 
The proof is almost same as the case of type $B_n$ but needs some modification. We shall list them. 

(1) $e_n(\tau)=e_n(\t)$ in the type $D_n$ case since the number of $i\in [n]$ with $w(i)<0$ is even for $w\in \tilde S^+_n$. 
So $f_n=(e_n(\tau)-e_n(\t))/2=0$ in the case of type $D_n$.

(2) Let $V_i^\pm$ and $\mL_i^\pm$ be defined similarly to the case of type $B_n$.  Then $\mL_i^+$ is naturally isomorphic to $\mD_{n-1}$ but $\mL_i^-$ is not because the number of $j \in [n]\backslash\{ i\}$ with $w(j)<0$ is odd for $w\in \tilde S_n^+$.  Therefore the induction argument as in Lemma~\ref{surj} does not work.  To overcome this, we need to apply the induction argument to $\mD_n$ and $\mD_n^-$ simultaneously because $\mL_i^-$ is isomorphic to $\mD_{n-1}^-$.  Note that if we start with $\mD_n^-$, then $\mL_i^+$ (for $\mD_n^-$) is isomorphic to $\mD_{n-1}^-$ while $\mL_i^-$ (for $\mD_n^-$) is isomorphic to $\mD_{n-1}$.  

(3) If $h\in \mHT^*(\mD_n)$ vanishes on $V_i^+$ for $i<q$, then one can modify $h$ so that it vanishes on $V_i^+$ for $i<q+1$ by subtracting from $h$ a polynomial of the form $G_+^q\prod_{k=1}^{q-1}(\tau_k-\t_n)$ in $\tau_i$'s and $\t_i$'s {with integer coefficients}.  Therefore, we may assume that $h$ vanishes on all $V_i^+$.  Then $h(\w)$ for $\w\in V_1^-$ is divisible by $\prod_{k=2}^n(\t_{\w(k)}-\t_n)=\prod_{k=2}^n(\tau_k-\t_n)(w)$.  (Note that $w$ is connected to a vertex in $V_i^+$ by an edge for $i>1$, but not to any vertex in $V_1^+$. This is the reason why $i=1$ is missing in the product above.)  However, since $f_n=0$ (i.e. $e_n(\tau)=e_n(\t)$) as mentioned in (1) above in the case of type $D_n$, it follows from \eqref{elementB} that 
\begin{equation} \label{V1-}
P:=-\frac{1}{2\t_n}\prod_{k=1}^n(\tau_k-\t_n) = \sum_{k=1}^{n-1}(-1)^{n-1-k}f_i\t_n^{n-1-k}. 
\end{equation}
$P$ is a polynomial in $\t_i$'s and $f_i$'s {with integer coefficients}, vanishes on all $V_i^+$ and takes the value $\prod_{k=2}^n(\t_{\w(k)}-\t_n)$ on $w\in V_1^-$.  Therefore, using the polynomial $P$ in \eqref{V1-}, one can modify $h$ so that it vanishes on all $V_i^+$ and $V_1^-$ by subtracting a polynomial in $\tau_i$'s and $\t_i$'s {with integer coefficients}.  If $h$ vanishes on all $V_i^+$ and $V_j^-$ for $j<q$ with some $q\ge 2$, then one can modify $h$ so that it vanishes on all $V_i^+$ and $V_j^-$ for $j<q+1$ by subtracting from $h$ an integer coefficient polynomial of the form $G_-^qP\prod_{l=1}^{q-1}(\tau_l+\t_n)$.  Therefore we finally reach an element which vanishes on all vertices of $\mD_n$.  This shows that $\mHT^*(\mD_n)$ is generated by $\tau_i$'s, $\t_i$'s and $f_i$'s as a ring. The same argument shows that $\mHT^*(\mD_n^-)$ is also generated by $\tau_i$'s, $\t_i$'s and $f_i$'s as a ring.

(4) A similar argument to the case of type $B_n$ shows that the right hand side in \eqref{coDn} is torsion free over $\Z$ and the Hilbert series of the both sides in \eqref{coDn} coincide, in fact, they are given by ${\frac{1-s^{2n}}{(1-s^2)^{2n}}} \prod_{i=1}^{n-1}(1-s^{4i})$.  The same is true for $\mHT^*(\mD_n^-)$. 
\end{proof}

\bigskip
\noindent
{\bf Acknowledgment.}  The authors would like to thank Takeshi Ikeda, Shizuo Kaji, {Leonardo C. Mihalcea}, Hiroshi Naruse and Takashi Sato   
for information on the integral cohomology rings of flag manifolds and for pointing out mistakes in an earlier version.

\end{document}